\documentclass[11pt]{amsart}
\usepackage{amsmath, amssymb, comment, graphicx, url, amsthm, multicol, latexsym, enumerate, bbm, mathtools, physics, microtype, cite, xcolor, float}
\pagestyle{plain} 

\usepackage{graphicx} 
\usepackage{units}
\usepackage[all,cmtip]{xy}
\usepackage{wrapfig}
\usepackage[hmargin=1in]{geometry} 
\usepackage{tikz}
\usepackage{tikzsymbols}
\usetikzlibrary{shapes.geometric, arrows}
\usepackage[cm]{sfmath}
\usepackage{hyperref}
\usepackage{indentfirst}

\usepackage{subcaption}

\definecolor{andresblue}{rgb}{0,0.72,0.92}
\definecolor{andrespink}{rgb}{1,0,1}
\definecolor{orange}{rgb}{1,0.5,0}

\tikzset{
    back/.style={
        loosely dotted,
        thin
    },
    edge/.style={
        color=black,
        thick
    },
    facet/.style={
        fill=andresblue,
        fill opacity=0.333333
    },
    vertex/.style={
        inner sep=1.5pt,
        circle,
        draw=black,
        fill=andrespink,
        thick
    },
    subvertex/.style={
        inner sep=1.5pt,
        circle,
        draw=orange!75!black,
        fill=orange,
        thick
    },
    subedge/.style={
        color=orange!95!black,
        thick
    },
    subfacet/.style={
        fill=orange!95!black,
        fill opacity=0.5
    },
    subback/.style={
        dotted,
        thick
    },
    slabel/.style n args={2}{
        label={[font=\scriptsize,black,#1]:{#2}}
    },
}

\hypersetup{
    colorlinks=true,
    linkcolor=blue,
    citecolor=magenta,
    filecolor=magenta,      
    urlcolor=magenta,
}

\newtheorem{theorem}{Theorem}[section]
\newtheorem{proposition}[theorem]{Proposition}

\newtheorem{lemma}[theorem]{Lemma}
\newtheorem{corollary}[theorem]{Corollary}

\theoremstyle{definition}

\newtheorem{definition}[theorem]{Definition}

\newtheorem{example}[theorem]{Example}

\theoremstyle{remark}

\newcommand{\defterm}[1]{\emph{#1}}

\newcommand{\R}{\mathbb{R}}

\newcommand{\Z}{\mathbb{Z}}

\renewcommand{\va}{\mathbf{a}}
\renewcommand{\vb}{\mathbf{b}}
\newcommand{\vc}{\mathbf{c}}
\newcommand{\ve}{\mathbf{e}}
\newcommand{\vn}{\mathbf{n}}
\newcommand{\vv}{\mathbf{v}}
\newcommand{\vx}{\mathbf{x}}
\newcommand{\vz}{\mathbf{z}}
\newcommand{\bv}[1]{\mathbf{#1}}
\newcommand{\pprime}{{\prime\prime}}
\newcommand{\qso}{\quad\text{so}\quad}
\newcommand{\set}[2]{\left\{#1\vphantom{#2}
\;\right\rvert\;\left.
#2\vphantom{#1}\right\}}
\renewcommand{\P}{\mathcal{P}}
\newcommand{\SEP}{\mathrm{SEP}}
\DeclareMathOperator{\rvol}{vol}
\DeclareMathOperator{\Span}{span}
\newcommand{\etal}{\textit{et al.} }
\newcommand{\Aut}{\textrm{Aut}}
\newcommand{\G}{\mathcal{G}}

\DeclareMathOperator{\conv}{conv}

\makeatletter 
\newtheorem*{rep@theorem}{\rep@title}\newcommand{\newreptheorem}[2]{%
\newenvironment{rep#1}[1]{%
\def\rep@title{\bf #2 \ref{##1}}%
\begin{rep@theorem}}%
{\end{rep@theorem}}}
\makeatother
\newreptheorem{theorem}{Theorem}

\newtheorem*{rep@proposition}{\rep@title}\newcommand{\newrepproposition}[2]{%
\newenvironment{rep#1}[1]{%
\def\rep@title{\bf #2 \ref{##1}}%
\begin{rep@proposition}}%
{\end{rep@proposition}}}
\makeatother
\newreptheorem{proposition}{Proposition}

\begin{document}

\title{Unitary Actions and Equivariant Volumes of\\ Symmetric Edge Polytopes}

\author{Tito Augusto Cuchilla}
\address{\scriptsize{Department of Mathematics, Harvey Mudd College}}
\email{\scriptsize{tcuchilla@g.hmc.edu}}

\author{Joseph Hound}
\address{\scriptsize{Department of Mathematics, Claremont Graduate University}}
\email{\scriptsize{joseph.hound@cgu.edu}}

\author{Cole Plepel}
\address{\scriptsize{Department of Mathematics, Harvey Mudd College}}
\email{\scriptsize{cplepel@g.hmc.edu}}

\author{Andr\'es R. Vindas-Mel\'endez}
\address{\scriptsize{Department of Mathematics, Harvey Mudd College}, \url{https://math.hmc.edu/arvm/}}
\email{\scriptsize{avindasmelendez@g.hmc.edu}}

\author{Louis Ye}
\address{\scriptsize{Department of Mathematics, Harvey Mudd College}}
\email{\scriptsize{yye@g.hmc.edu}}


\begin{abstract}
    The symmetric edge polytope ($\SEP$) of a finite simple graph $G$ is a centrally symmetric lattice polytope whose vertices are defined by the edges of the graph.
    Among the information encoded by these polytopes are the symmetries of the graph, which appear as symmetries of the polytope.
    We describe the rigid symmetries of these polytopes, and show that $\SEP$s are unitarily equivalent exactly when their associated graphs are isomorphic.
    We then find an explicit relationship between the relative volumes of the subsets of the symmetric edge polytope $\SEP(G)$ fixed by the natural action of symmetric group elements and the symmetric edge polytopes of smaller graphs to which the subsets are linearly equivalent.
    We also provide a vertex description of the fixed polytopes and find a description of the symmetric edge polytopes to which they are equivalent, in terms of contractions of the graph $G$ induced by the cycle decompositions of the permutations under which the subsets are fixed.
    Specializations of our results provide equivalence and volume relationships for fixed polytopes of symmetric edge polytopes of complete graphs (equivalently, for fixed polytopes of root polytopes of type $A_n$), and describe the symmetry group of this family of polytopes.
\end{abstract}


\maketitle

\vspace{-.5cm}
\section{Introduction}

The \defterm{symmetric edge polytope} associated with a simple connected graph $G=([n], E)$ is defined as
    \begin{equation*}
        \SEP(G)=\conv\set{\pm(\ve_i-\ve_j)\in \R^n}{(i,j)\in E},
    \end{equation*}
where $\ve_i$ denotes $i^{\text{th}}$ standard basis in $\R^n$.
These polytopes were first introduced in \cite{sep-paper} and have since been widely studied.
Symmetric edge polytopes are of interest in many areas, including polyhedral geometry, (equivariant) Ehrhart theory, applications to
algebraic Kuramoto equations, and matroid theory \cite{ChenDavis, ChenDavisMehta, DAliJuhnkeKohneVenturello, DAliDelucchi, HigashitaniJochemkoMichalek, KalmanTothmeresz, OhsugiTsuchiya, BraunBrueggeKahle, BraunBruegge, DAliJuhnkeKoch, ClarkeHigashitaniKolbl}.
See Figure \ref{fig:sep-examples} for images of the of symmetric edge polytopes for the complete graph $K_4$ and the star graph $\textrm{Star}_3$.

Symmetric edge polytopes capture edge information of graphs through the discrete geometry of a convex polytope.
In this paper, we study the symmetries, equivalences, and fixed polytopes induced by symmetric group actions on symmetric edge polytopes.
Namely, the symmetric group $S_n$ acts on $\SEP(G) \subset \R^n$ by permuting coordinates, that is, a permutation $\sigma \in S_n$ acts by $\sigma\cdot\ve_i=\ve_{\sigma(i)}$.

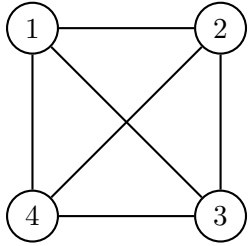
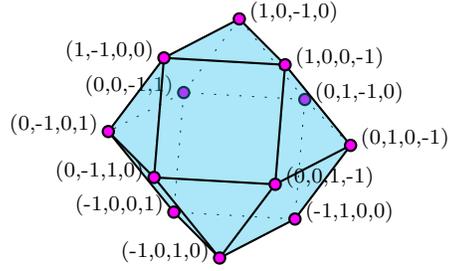
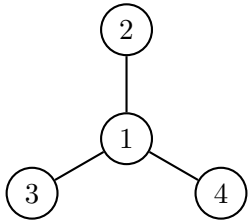
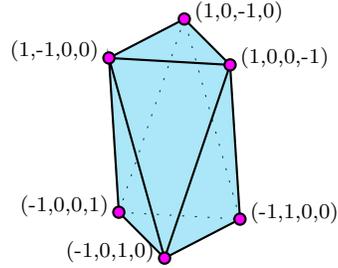
\begin{figure*}
    \centering
    \begin{subfigure}{0.5\textwidth}
        \centering
        \begin{tikzpicture}[
            scale=1.25,
            every node/.style={circle,thick,draw}
            ]
            \node (a) at (-1,1) {1};
            \node (b) at (1,1) {2};
            \node (c) at (1,-1) {3};
            \node (d) at (-1,-1) {4};
            \path [thick] (a) edge (b);
            \path [thick] (a) edge (c);
            \path [thick] (a) edge (d);
            \path [thick] (b) edge (c);
            \path [thick] (b) edge (d);
            \path [thick] (c) edge (d);
        \end{tikzpicture}
        \subcaption{$K_4$, the complete graph on 4 vertices}
    \end{subfigure}%
    \begin{subfigure}{0.5\textwidth}
        \centering
        \begin{tikzpicture}[
            x={(0.374024cm, -0.375379cm)},
            y={(0.927419cm, 0.151412cm)},
            z={(-0.000023cm, 0.914421cm)},
            scale=1.15,
            ]
            \draw[edge,back] (0.00000, 0.81650, 2.30940) -- (0.00000, 1.63299, 1.15470);
            \draw[edge,back] (0.00000, 0.81650, 2.30940) -- (-0.70711, 0.40825, 1.15470);
            \draw[edge,back] (1.41421, 1.63299, 1.15470) -- (0.00000, 1.63299, 1.15470);
            \draw[edge,back] (0.00000, 1.63299, 1.15470) -- (-0.70711, 0.40825, 1.15470);
            \draw[edge,back] (0.00000, 1.63299, 1.15470) -- (0.70711, 1.22474, 0.00000);
            \draw[edge,back] (-0.70711, 0.40825, 1.15470) -- (0.00000, -0.81650, 1.15470);
            \draw[edge,back] (-0.70711, 0.40825, 1.15470) -- (0.00000, 0.00000, 0.00000);
            \draw[edge,back] (0.70711, 1.22474, 0.00000) -- (0.00000, 0.00000, 0.00000);
            \node[vertex,slabel={right}{(0,1,-1,0)}] at (0.00000, 1.63299, 1.15470)     {};
            \node[vertex,slabel={left=0.0035cm}{(0,0,-1,1)}] at (-0.70711, 0.40825, 1.15470)     {};
            \fill[facet] (0.70711, -0.40825, 2.30940) -- (1.41421, 0.81650, 2.30940) -- (0.00000, 0.81650, 2.30940) -- cycle {};
            \fill[facet] (2.12132, 0.40825, 1.15470) -- (1.41421, 0.81650, 2.30940) -- (1.41421, 1.63299, 1.15470) -- cycle {};
            \fill[facet] (1.41421, -0.81650, 1.15470) -- (0.70711, -0.40825, 2.30940) -- (1.41421, 0.81650, 2.30940) -- (2.12132, 0.40825, 1.15470) -- cycle {};
            \fill[facet] (0.00000, -0.81650, 1.15470) -- (0.70711, -0.40825, 2.30940) -- (1.41421, -0.81650, 1.15470) -- cycle {};
            \fill[facet] (1.41421, 0.00000, 0.00000) -- (2.12132, 0.40825, 1.15470) -- (1.41421, 1.63299, 1.15470) -- (0.70711, 1.22474, 0.00000) -- cycle {};
            \fill[facet] (1.41421, 0.00000, 0.00000) -- (2.12132, 0.40825, 1.15470) -- (1.41421, -0.81650, 1.15470) -- cycle {};
            \fill[facet] (0.00000, 0.00000, 0.00000) -- (0.00000, -0.81650, 1.15470) -- (1.41421, -0.81650, 1.15470) -- (1.41421, 0.00000, 0.00000) -- cycle {};
            \draw[edge] (1.41421, 0.81650, 2.30940) -- (0.00000, 0.81650, 2.30940);
            \draw[edge] (1.41421, 0.81650, 2.30940) -- (0.70711, -0.40825, 2.30940);
            \draw[edge] (1.41421, 0.81650, 2.30940) -- (1.41421, 1.63299, 1.15470);
            \draw[edge] (1.41421, 0.81650, 2.30940) -- (2.12132, 0.40825, 1.15470);
            \draw[edge] (0.00000, 0.81650, 2.30940) -- (0.70711, -0.40825, 2.30940);
            \draw[edge] (0.70711, -0.40825, 2.30940) -- (1.41421, -0.81650, 1.15470);
            \draw[edge] (0.70711, -0.40825, 2.30940) -- (0.00000, -0.81650, 1.15470);
            \draw[edge] (1.41421, 1.63299, 1.15470) -- (2.12132, 0.40825, 1.15470);
            \draw[edge] (1.41421, 1.63299, 1.15470) -- (0.70711, 1.22474, 0.00000);
            \draw[edge] (2.12132, 0.40825, 1.15470) -- (1.41421, -0.81650, 1.15470);
            \draw[edge] (2.12132, 0.40825, 1.15470) -- (1.41421, 0.00000, 0.00000);
            \draw[edge] (1.41421, -0.81650, 1.15470) -- (0.00000, -0.81650, 1.15470);
            \draw[edge] (1.41421, -0.81650, 1.15470) -- (1.41421, 0.00000, 0.00000);
            \draw[edge] (0.00000, -0.81650, 1.15470) -- (0.00000, 0.00000, 0.00000);
            \draw[edge] (0.70711, 1.22474, 0.00000) -- (1.41421, 0.00000, 0.00000);
            \draw[edge] (1.41421, 0.00000, 0.00000) -- (0.00000, 0.00000, 0.00000);
            \node[vertex,slabel={right}{(1,0,0,-1)}] at (1.41421, 0.81650, 2.30940)     {};
            \node[vertex,slabel={right}{(1,0,-1,0)}] at (0.00000, 0.81650, 2.30940)     {};
            \node[vertex,slabel={left}{(1,-1,0,0)}] at (0.70711, -0.40825, 2.30940)     {};
            \node[vertex,slabel={right}{(0,1,0,-1)}] at (1.41421, 1.63299, 1.15470)     {};
            \node[vertex,slabel={right}{(0,0,1,-1)}] at (2.12132, 0.40825, 1.15470)     {};
            \node[vertex,slabel={left}{(0,-1,1,0)}] at (1.41421, -0.81650, 1.15470)     {};
            \node[vertex,slabel={left}{(0,-1,0,1)}] at (0.00000, -0.81650, 1.15470)     {};
            \node[vertex,slabel={right}{(-1,1,0,0)}] at (0.70711, 1.22474, 0.00000)     {};
            \node[vertex,slabel={left}{(-1,0,1,0)}] at (1.41421, 0.00000, 0.00000)     {};
            \node[vertex,slabel={left}{(-1,0,0,1)}] at (0.00000, 0.00000, 0.00000)     {};
        \end{tikzpicture}
        \caption{$\SEP(K_4)$, the symmetric edge polytope of $K_4$}
    \end{subfigure}
    \smallskip
    
    \begin{subfigure}{0.5\textwidth}
        \centering
        \begin{tikzpicture}[
            scale=1.45,
            every node/.style={circle,thick,draw}
            ]
            \node (a) at (0,0) {1};
            \node (b) at (0,1) {2};
            \node (c) at (-0.86603,-0.5) {3};
            \node (d) at (0.86603,-0.5) {4};
            \path [thick] (a) edge (b);
            \path [thick] (a) edge (c);
            \path [thick] (a) edge (d);
        \end{tikzpicture}
        \subcaption{$\textrm{Star}_3$, the star graph with 3 spokes}
    \end{subfigure}%
    \begin{subfigure}{0.5\textwidth}
        \centering
        \begin{tikzpicture}[
            x={(0.374024cm, -0.375379cm)},
            y={(0.927419cm, 0.151412cm)},
            z={(-0.000023cm, 0.914421cm)},
            scale=1.15
            ]
            \coordinate (ad) at (1.41421, 0.81650, 2.30940);
            \coordinate (ac) at (0.00000, 0.81650, 2.30940);
            \coordinate (ab) at (0.70711, -0.40825, 2.30940);
            \coordinate (bd) at (1.41421, 1.63299, 1.15470);
            \coordinate (cd) at (2.12132, 0.40825, 1.15470);
            \coordinate (cb) at (1.41421, -0.81650, 1.15470);
            \coordinate (db) at (0.00000, -0.81650, 1.15470);
            \coordinate (ba) at (0.70711, 1.22474, 0.00000);
            \coordinate (ca) at (1.41421, 0.00000, 0.00000);
            \coordinate (da) at (0.00000, 0.00000, 0.00000);
            \draw[edge, back] (da) -- (ba);
            \draw[edge, back] (da) -- (ac);
            \draw[edge, back] (ba) -- (ac);
            \fill[facet] (da) -- (ab) -- (ca) -- cycle;
            \fill[facet] (ab) -- (ca) -- (ad) -- cycle;
            \fill[facet] (ca) -- (ad) -- (ba) -- cycle;
            \fill[facet] (ab) -- (ac) -- (ad) -- cycle;
            \draw[edge] (ac) -- (ab) -- (ad) -- (ac);
            \draw[edge] (da) -- (ab) -- (ca) -- (ad) -- (ba);
            \draw[edge] (da) -- (ca) -- (ba);
            \node[vertex,slabel={right}{(1,0,-1,0)}] at (ac) {};
            \node[vertex,slabel={right}{(1,0,0,-1)}] at (ad) {};
            \node[vertex,slabel={left}{(1,-1,0,0)}] at (ab) {};
            \node[vertex,slabel={left}{(-1,0,0,1)}] at (da) {};
            \node[vertex,slabel={left}{(-1,0,1,0)}] at (ca) {};
            \node[vertex,slabel={right}{(-1,1,0,0)}] at (ba) {};
        \end{tikzpicture}
        \caption{$\SEP(\textrm{Star}_3)$, the symmetric edge polytope of $\textrm{Star}_3$}
    \end{subfigure}
    \caption{Some graphs and their corresponding symmetric edge polytopes}
    \label{fig:sep-examples}
\end{figure*}

Letting $\SEP(G)^\sigma$ be the portion of $\SEP(G)$ fixed by the action of a permutation $\sigma$, our main result on fixed polytopes relates the volumes of fixed polytopes of symmetric edge polytopes to the volumes of the symmetric edge polytopes to which they are linearly equivalent.

\begin{reptheorem}{thm:sep-sub-vol}
    Let $G=([n],E)$ be a graph on $n$ vertices, let $\sigma\in S_n$ have cycle decomposition $\sigma=\sigma_1\cdots\sigma_m$, and let $G'=(\sigma,E')$ be the contraction of $G$ induced by $\sigma$ as a partition of $[n]$.
    Then,
    \begin{equation*}
    \rvol(\SEP(G)^\sigma)=\frac{\gcd(|\sigma_1|,\dots,|\sigma_m|)}{\prod_{i=1}^m|\sigma_i|}\rvol(\SEP(G')).
    \end{equation*}
\end{reptheorem}

The rest of the paper is as follows.
In Section \ref{sec:preliminaries}, we provide a brief introduction to concepts from discrete geometry, graph theory, and algebra used throughout this paper.
Then, in Section \ref{sec:invariant_actions}, we identify linear transformations under which symmetric edge polytopes are invariant, in terms of the underlying graph, and show that symmetric edge polytopes are unitarily equivalent if and only if their associated graphs are isomorphic.

In Section \ref{sec:fixed_polytopes}, we study the subsets of symmetric edge polytopes fixed by coordinate permutations, which have natural interpretations in terms of the underlying graphs.
In particular, we show that these fixed polytopes are linearly equivalent to the symmetric edge polytopes associated with contractions of the original graph, and we find an explicit relationship between the volume of these fixed polytopes and the symmetric edge polytopes to which they are linearly equivalent.

Finally, in Section \ref{sec:fixed_root_polytopes}, we specialize these results to a particular family of symmetric edge polytopes, which have been studied from various perspectives in diverse branches of mathematics.
Specifically, we identify the consequences of the equivalence between symmetric edge polytopes of complete graphs and fixed subpolytopes thereof and present an explicit formula for the volumes of these fixed polytopes.

\section{Preliminaries} \label{sec:preliminaries}

By $[n]$, we denote the set of integers from $1$ to $n$ and we denote the symmetric group on $[n]$ by $S_n$.
By abuse of notation, we treat a $k$-cycle $\sigma\in S_n$ as a $k$-element set containing the non-fixed elements of $[n]$.
Similarly, when convenient, we treat $\sigma\in S_n$ as a partition of $[n]$ consisting of the sets associated to the components of its cycle decomposition.
The complement of a set $S$ is denoted $S^C$.

When a group $\G$ acts on a set $S$, we use $g\cdot s$ to represent the result of applying the action of a group element $g\in \G$ to an element $s\in S$.
We will frequently deal with the action of the symmetric group, $S_n$, on the vector space $\R^n$ and polytopes in that space.
Unless otherwise specified, we will assume the natural coordinate permutation representation of $S_n$ as its action on $\R^n$, and extend this to polytopes via pointwise transformation.

\begin{definition}\label{def:e}
    For $i\in[n]$, we define $\ve_i\in\R^n$ to be the \defterm{standard basis vector} whose $i^{\text{th}}$ coordinate is $1$, with all other coordinates $0$.
    The dimension of this vector will be clear from context.
    For $S\subseteq[n]$, we define $\ve_S=\sum_{i\in S}\ve_i$, and we use $\bv1=\ve_{[n]}$ to denote the all ones vector.
\end{definition}

Our main results describe the volumes of fixed polytopes.

\begin{definition}
    Given a group $\G$, a polytope $\P$ is \defterm{$\G$-invariant} if $g\cdot\P=\P$ for all $g\in \G$, where $g\cdot\P=\set{g\cdot\vx}{\vx\in\P}$.

For a $\G$-invariant polytope $\P$ in $\R^n$ and a group element $g\in \G$ identified with its representation as a linear transformation acting on $\R^n$, we define the \defterm{fixed polytope} of $\P$ fixed by $g$ as
    \begin{equation*}
        \P^g = \set{\vx\in\P}{g\cdot\vx=\vx}.
    \end{equation*}
\end{definition}

In particular, we study the relative volume of these polytopes with respect to the integer lattice of their affine span, which we define presently.

\begin{definition}\label{def:rvol}
    We define the \defterm{relative volume} of a polytope $\P$, denoted $\rvol\P$, to be the ratio of its Euclidean volume to the Euclidean volume of the smallest parallelepiped whose affine span matches that of $\P$.
    This quantity is also known to be the leading coefficient in the Ehrhart quasipolynomial of $\P$ \cite{ccd}.
\end{definition}

When discussing relative volumes, it is also useful to be able to refer to families of polytopes whose relative volume is easily computed.
We define one such family here.

\begin{definition}\label{def:half-open-parallelepiped}
    The \defterm{half-open parallelepiped} formed by a set of vectors $\vv_1,\dots,\vv_m$ is defined as
    \begin{equation*}
        \set{\lambda_1\vv_1+\dots+\lambda_m\vv_m}{0\le\lambda_i<1\ \text{and}\ \sum_{i=1}^m\lambda_i=1}.
    \end{equation*}
\end{definition}

\begin{figure*}
    \begin{subfigure}{0.25\textwidth}
        \centering
        \begin{tikzpicture}[
            x={(0.374024cm, -0.375379cm)},
            y={(0.927419cm, 0.151412cm)},
            z={(-0.000023cm, 0.914421cm)},
            scale=1.0,
            ]
            \draw[edge,back] (0.00000, 0.81650, 2.30940) -- (0.00000, 1.63299, 1.15470);
            \draw[edge,back] (0.00000, 0.81650, 2.30940) -- (-0.70711, 0.40825, 1.15470);
            \draw[edge,back] (1.41421, 1.63299, 1.15470) -- (0.00000, 1.63299, 1.15470);
            \draw[edge,back] (0.00000, 1.63299, 1.15470) -- (-0.70711, 0.40825, 1.15470);
            \draw[edge,back] (0.00000, 1.63299, 1.15470) -- (0.70711, 1.22474, 0.00000);
            \draw[edge,back] (-0.70711, 0.40825, 1.15470) -- (0.00000, -0.81650, 1.15470);
            \draw[edge,back] (-0.70711, 0.40825, 1.15470) -- (0.00000, 0.00000, 0.00000);
            \draw[edge,back] (0.70711, 1.22474, 0.00000) -- (0.00000, 0.00000, 0.00000);
                \draw[subedge,subback] (0.00000, 1.22474, 1.73205) -- (1.41421, 1.22474, 1.73205);
                \draw[subedge,subback] (0.00000, 1.22474, 1.73205) -- (-0.70711, 0.40825, 1.15470);
                \draw[subedge,subback] (-0.70711, 0.40825, 1.15470) -- (0.00000, -0.40825, 0.57735);
            \node[vertex,] at (0.00000, 1.63299, 1.15470)     {};
            \node[vertex,] at (-0.70711, 0.40825, 1.15470)     {};
                \fill[subfacet] (0.00000, -0.40825, 0.57735) -- (-0.70711, 0.40825, 1.15470) -- (0.00000, 1.22474, 1.73205) -- (1.41421, 1.22474, 1.73205) -- (2.12132, 0.40825, 1.15470) -- (1.41421, -0.40825, 0.57735) -- cycle {};
                \node[subvertex] at (0.00000, 1.22474, 1.73205)     {};
                \node[subvertex] at (-0.70711, 0.40825, 1.15470)     {};
            \fill[facet] (0.70711, -0.40825, 2.30940) -- (1.41421, 0.81650, 2.30940) -- (0.00000, 0.81650, 2.30940) -- cycle {};
            \fill[facet] (2.12132, 0.40825, 1.15470) -- (1.41421, 0.81650, 2.30940) -- (1.41421, 1.63299, 1.15470) -- cycle {};
            \fill[facet] (1.41421, -0.81650, 1.15470) -- (0.70711, -0.40825, 2.30940) -- (1.41421, 0.81650, 2.30940) -- (2.12132, 0.40825, 1.15470) -- cycle {};
            \fill[facet] (0.00000, -0.81650, 1.15470) -- (0.70711, -0.40825, 2.30940) -- (1.41421, -0.81650, 1.15470) -- cycle {};
            \fill[facet] (1.41421, 0.00000, 0.00000) -- (2.12132, 0.40825, 1.15470) -- (1.41421, 1.63299, 1.15470) -- (0.70711, 1.22474, 0.00000) -- cycle {};
            \fill[facet] (1.41421, 0.00000, 0.00000) -- (2.12132, 0.40825, 1.15470) -- (1.41421, -0.81650, 1.15470) -- cycle {};
            \fill[facet] (0.00000, 0.00000, 0.00000) -- (0.00000, -0.81650, 1.15470) -- (1.41421, -0.81650, 1.15470) -- (1.41421, 0.00000, 0.00000) -- cycle {};
            \draw[edge] (1.41421, 0.81650, 2.30940) -- (0.00000, 0.81650, 2.30940);
            \draw[edge] (1.41421, 0.81650, 2.30940) -- (0.70711, -0.40825, 2.30940);
            \draw[edge] (1.41421, 0.81650, 2.30940) -- (1.41421, 1.63299, 1.15470);
            \draw[edge] (1.41421, 0.81650, 2.30940) -- (2.12132, 0.40825, 1.15470);
            \draw[edge] (0.00000, 0.81650, 2.30940) -- (0.70711, -0.40825, 2.30940);
            \draw[edge] (0.70711, -0.40825, 2.30940) -- (1.41421, -0.81650, 1.15470);
            \draw[edge] (0.70711, -0.40825, 2.30940) -- (0.00000, -0.81650, 1.15470);
            \draw[edge] (1.41421, 1.63299, 1.15470) -- (2.12132, 0.40825, 1.15470);
            \draw[edge] (1.41421, 1.63299, 1.15470) -- (0.70711, 1.22474, 0.00000);
            \draw[edge] (2.12132, 0.40825, 1.15470) -- (1.41421, -0.81650, 1.15470);
            \draw[edge] (2.12132, 0.40825, 1.15470) -- (1.41421, 0.00000, 0.00000);
            \draw[edge] (1.41421, -0.81650, 1.15470) -- (0.00000, -0.81650, 1.15470);
            \draw[edge] (1.41421, -0.81650, 1.15470) -- (1.41421, 0.00000, 0.00000);
            \draw[edge] (0.00000, -0.81650, 1.15470) -- (0.00000, 0.00000, 0.00000);
            \draw[edge] (0.70711, 1.22474, 0.00000) -- (1.41421, 0.00000, 0.00000);
            \draw[edge] (1.41421, 0.00000, 0.00000) -- (0.00000, 0.00000, 0.00000);
                \draw[subedge] (1.41421, 1.22474, 1.73205) -- (2.12132, 0.40825, 1.15470);
                \draw[subedge] (2.12132, 0.40825, 1.15470) -- (1.41421, -0.40825, 0.57735);
                \draw[subedge] (1.41421, -0.40825, 0.57735) -- (0.00000, -0.40825, 0.57735);
            \node[vertex,] at (1.41421, 0.81650, 2.30940)     {};
            \node[vertex,] at (0.00000, 0.81650, 2.30940)     {};
            \node[vertex,] at (0.70711, -0.40825, 2.30940)     {};
            \node[vertex,] at (1.41421, 1.63299, 1.15470)     {};
            \node[vertex,] at (2.12132, 0.40825, 1.15470)     {};
            \node[vertex,] at (1.41421, -0.81650, 1.15470)     {};
            \node[vertex,] at (0.00000, -0.81650, 1.15470)     {};
            \node[vertex,] at (0.70711, 1.22474, 0.00000)     {};
            \node[vertex,] at (1.41421, 0.00000, 0.00000)     {};
            \node[vertex,] at (0.00000, 0.00000, 0.00000)     {};
                \node[subvertex] at (1.41421, 1.22474, 1.73205)     {};
                \node[subvertex] at (2.12132, 0.40825, 1.15470)     {};
                \node[subvertex] at (1.41421, -0.40825, 0.57735)     {};
                \node[subvertex] at (0.00000, -0.40825, 0.57735)     {};
        \end{tikzpicture}
        \subcaption{$\SEP(K_4)^{(12)}$}
    \end{subfigure}%
    \begin{subfigure}{0.25\textwidth}
        \centering
        \begin{tikzpicture}[
            x={(0.374024cm, -0.375379cm)},
            y={(0.927419cm, 0.151412cm)},
            z={(-0.000023cm, 0.914421cm)},
            scale=1.0,
            ]
            \draw[edge,back] (0.00000, 0.81650, 2.30940) -- (0.00000, 1.63299, 1.15470);
            \draw[edge,back] (0.00000, 0.81650, 2.30940) -- (-0.70711, 0.40825, 1.15470);
            \draw[edge,back] (1.41421, 1.63299, 1.15470) -- (0.00000, 1.63299, 1.15470);
            \draw[edge,back] (0.00000, 1.63299, 1.15470) -- (-0.70711, 0.40825, 1.15470);
            \draw[edge,back] (0.00000, 1.63299, 1.15470) -- (0.70711, 1.22474, 0.00000);
            \draw[edge,back] (-0.70711, 0.40825, 1.15470) -- (0.00000, -0.81650, 1.15470);
            \draw[edge,back] (-0.70711, 0.40825, 1.15470) -- (0.00000, 0.00000, 0.00000);
            \draw[edge,back] (0.70711, 1.22474, 0.00000) -- (0.00000, 0.00000, 0.00000);
            \node[vertex,] at (0.00000, 1.63299, 1.15470)     {};
            \node[vertex,] at (-0.70711, 0.40825, 1.15470)     {};
                \node[subvertex] at (-0.23570, -0.13608, 0.76980) {};
                \draw[subedge,subback] (1.64992, 0.95258, 1.53960) -- (-0.23570, -0.13608, 0.76980);
            \fill[facet] (0.70711, -0.40825, 2.30940) -- (1.41421, 0.81650, 2.30940) -- (0.00000, 0.81650, 2.30940) -- cycle {};
            \fill[facet] (2.12132, 0.40825, 1.15470) -- (1.41421, 0.81650, 2.30940) -- (1.41421, 1.63299, 1.15470) -- cycle {};
            \fill[facet] (1.41421, -0.81650, 1.15470) -- (0.70711, -0.40825, 2.30940) -- (1.41421, 0.81650, 2.30940) -- (2.12132, 0.40825, 1.15470) -- cycle {};
            \fill[facet] (0.00000, -0.81650, 1.15470) -- (0.70711, -0.40825, 2.30940) -- (1.41421, -0.81650, 1.15470) -- cycle {};
            \fill[facet] (1.41421, 0.00000, 0.00000) -- (2.12132, 0.40825, 1.15470) -- (1.41421, 1.63299, 1.15470) -- (0.70711, 1.22474, 0.00000) -- cycle {};
            \fill[facet] (1.41421, 0.00000, 0.00000) -- (2.12132, 0.40825, 1.15470) -- (1.41421, -0.81650, 1.15470) -- cycle {};
            \fill[facet] (0.00000, 0.00000, 0.00000) -- (0.00000, -0.81650, 1.15470) -- (1.41421, -0.81650, 1.15470) -- (1.41421, 0.00000, 0.00000) -- cycle {};
            \draw[edge] (1.41421, 0.81650, 2.30940) -- (0.00000, 0.81650, 2.30940);
            \draw[edge] (1.41421, 0.81650, 2.30940) -- (0.70711, -0.40825, 2.30940);
            \draw[edge] (1.41421, 0.81650, 2.30940) -- (1.41421, 1.63299, 1.15470);
            \draw[edge] (1.41421, 0.81650, 2.30940) -- (2.12132, 0.40825, 1.15470);
            \draw[edge] (0.00000, 0.81650, 2.30940) -- (0.70711, -0.40825, 2.30940);
            \draw[edge] (0.70711, -0.40825, 2.30940) -- (1.41421, -0.81650, 1.15470);
            \draw[edge] (0.70711, -0.40825, 2.30940) -- (0.00000, -0.81650, 1.15470);
            \draw[edge] (1.41421, 1.63299, 1.15470) -- (2.12132, 0.40825, 1.15470);
            \draw[edge] (1.41421, 1.63299, 1.15470) -- (0.70711, 1.22474, 0.00000);
            \draw[edge] (2.12132, 0.40825, 1.15470) -- (1.41421, -0.81650, 1.15470);
            \draw[edge] (2.12132, 0.40825, 1.15470) -- (1.41421, 0.00000, 0.00000);
            \draw[edge] (1.41421, -0.81650, 1.15470) -- (0.00000, -0.81650, 1.15470);
            \draw[edge] (1.41421, -0.81650, 1.15470) -- (1.41421, 0.00000, 0.00000);
            \draw[edge] (0.00000, -0.81650, 1.15470) -- (0.00000, 0.00000, 0.00000);
            \draw[edge] (0.70711, 1.22474, 0.00000) -- (1.41421, 0.00000, 0.00000);
            \draw[edge] (1.41421, 0.00000, 0.00000) -- (0.00000, 0.00000, 0.00000);
            \node[vertex,] at (1.41421, 0.81650, 2.30940)     {};
            \node[vertex,] at (0.00000, 0.81650, 2.30940)     {};
            \node[vertex,] at (0.70711, -0.40825, 2.30940)     {};
            \node[vertex,] at (1.41421, 1.63299, 1.15470)     {};
            \node[vertex,] at (2.12132, 0.40825, 1.15470)     {};
            \node[vertex,] at (1.41421, -0.81650, 1.15470)     {};
            \node[vertex,] at (0.00000, -0.81650, 1.15470)     {};
            \node[vertex,] at (0.70711, 1.22474, 0.00000)     {};
            \node[vertex,] at (1.41421, 0.00000, 0.00000)     {};
            \node[vertex,] at (0.00000, 0.00000, 0.00000)     {};
                \node[subvertex] at (1.64992, 0.95258, 1.53960) {};
        \end{tikzpicture}
        \subcaption{$\SEP(K_4)^{(123)}$}
    \end{subfigure}%
    \begin{subfigure}{0.25\textwidth}
        \centering
        \begin{tikzpicture}[
            x={(0.374024cm, -0.375379cm)},
            y={(0.927419cm, 0.151412cm)},
            z={(-0.000023cm, 0.914421cm)},
            scale=1.0,
            ]
            \draw[edge,back] (0.00000, 0.81650, 2.30940) -- (0.00000, 1.63299, 1.15470);
            \draw[edge,back] (0.00000, 0.81650, 2.30940) -- (-0.70711, 0.40825, 1.15470);
            \draw[edge,back] (1.41421, 1.63299, 1.15470) -- (0.00000, 1.63299, 1.15470);
            \draw[edge,back] (0.00000, 1.63299, 1.15470) -- (-0.70711, 0.40825, 1.15470);
            \draw[edge,back] (0.00000, 1.63299, 1.15470) -- (0.70711, 1.22474, 0.00000);
            \draw[edge,back] (-0.70711, 0.40825, 1.15470) -- (0.00000, -0.81650, 1.15470);
            \draw[edge,back] (-0.70711, 0.40825, 1.15470) -- (0.00000, 0.00000, 0.00000);
            \draw[edge,back] (0.70711, 1.22474, 0.00000) -- (0.00000, 0.00000, 0.00000);
            \node[vertex,] at (0.00000, 1.63299, 1.15470)     {};
            \node[vertex,] at (-0.70711, 0.40825, 1.15470)     {};
                \node[subvertex] at (0.70711, 1.22474, 1.73205) {};
                \draw[subedge,subback] (0.70711, 1.22474, 1.73205) -- (0.70711, -0.40825, 0.57735);
            \fill[facet] (0.70711, -0.40825, 2.30940) -- (1.41421, 0.81650, 2.30940) -- (0.00000, 0.81650, 2.30940) -- cycle {};
            \fill[facet] (2.12132, 0.40825, 1.15470) -- (1.41421, 0.81650, 2.30940) -- (1.41421, 1.63299, 1.15470) -- cycle {};
            \fill[facet] (1.41421, -0.81650, 1.15470) -- (0.70711, -0.40825, 2.30940) -- (1.41421, 0.81650, 2.30940) -- (2.12132, 0.40825, 1.15470) -- cycle {};
            \fill[facet] (0.00000, -0.81650, 1.15470) -- (0.70711, -0.40825, 2.30940) -- (1.41421, -0.81650, 1.15470) -- cycle {};
            \fill[facet] (1.41421, 0.00000, 0.00000) -- (2.12132, 0.40825, 1.15470) -- (1.41421, 1.63299, 1.15470) -- (0.70711, 1.22474, 0.00000) -- cycle {};
            \fill[facet] (1.41421, 0.00000, 0.00000) -- (2.12132, 0.40825, 1.15470) -- (1.41421, -0.81650, 1.15470) -- cycle {};
            \fill[facet] (0.00000, 0.00000, 0.00000) -- (0.00000, -0.81650, 1.15470) -- (1.41421, -0.81650, 1.15470) -- (1.41421, 0.00000, 0.00000) -- cycle {};
            \draw[edge] (1.41421, 0.81650, 2.30940) -- (0.00000, 0.81650, 2.30940);
            \draw[edge] (1.41421, 0.81650, 2.30940) -- (0.70711, -0.40825, 2.30940);
            \draw[edge] (1.41421, 0.81650, 2.30940) -- (1.41421, 1.63299, 1.15470);
            \draw[edge] (1.41421, 0.81650, 2.30940) -- (2.12132, 0.40825, 1.15470);
            \draw[edge] (0.00000, 0.81650, 2.30940) -- (0.70711, -0.40825, 2.30940);
            \draw[edge] (0.70711, -0.40825, 2.30940) -- (1.41421, -0.81650, 1.15470);
            \draw[edge] (0.70711, -0.40825, 2.30940) -- (0.00000, -0.81650, 1.15470);
            \draw[edge] (1.41421, 1.63299, 1.15470) -- (2.12132, 0.40825, 1.15470);
            \draw[edge] (1.41421, 1.63299, 1.15470) -- (0.70711, 1.22474, 0.00000);
            \draw[edge] (2.12132, 0.40825, 1.15470) -- (1.41421, -0.81650, 1.15470);
            \draw[edge] (2.12132, 0.40825, 1.15470) -- (1.41421, 0.00000, 0.00000);
            \draw[edge] (1.41421, -0.81650, 1.15470) -- (0.00000, -0.81650, 1.15470);
            \draw[edge] (1.41421, -0.81650, 1.15470) -- (1.41421, 0.00000, 0.00000);
            \draw[edge] (0.00000, -0.81650, 1.15470) -- (0.00000, 0.00000, 0.00000);
            \draw[edge] (0.70711, 1.22474, 0.00000) -- (1.41421, 0.00000, 0.00000);
            \draw[edge] (1.41421, 0.00000, 0.00000) -- (0.00000, 0.00000, 0.00000);
            \node[vertex,] at (1.41421, 0.81650, 2.30940)     {};
            \node[vertex,] at (0.00000, 0.81650, 2.30940)     {};
            \node[vertex,] at (0.70711, -0.40825, 2.30940)     {};
            \node[vertex,] at (1.41421, 1.63299, 1.15470)     {};
            \node[vertex,] at (2.12132, 0.40825, 1.15470)     {};
            \node[vertex,] at (1.41421, -0.81650, 1.15470)     {};
            \node[vertex,] at (0.00000, -0.81650, 1.15470)     {};
            \node[vertex,] at (0.70711, 1.22474, 0.00000)     {};
            \node[vertex,] at (1.41421, 0.00000, 0.00000)     {};
            \node[vertex,] at (0.00000, 0.00000, 0.00000)     {};
                \node[subvertex] at (0.70711, -0.40825, 0.57735) {};
        \end{tikzpicture}
        \subcaption{$\SEP(K_4)^{(12)(34)}$}
    \end{subfigure}%
    \begin{subfigure}{0.25\textwidth}
        \centering
        \begin{tikzpicture}[
            x={(0.374024cm, -0.375379cm)},
            y={(0.927419cm, 0.151412cm)},
            z={(-0.000023cm, 0.914421cm)},
            scale=1.0,
            ]
            \draw[edge,back] (0.00000, 0.81650, 2.30940) -- (0.00000, 1.63299, 1.15470);
            \draw[edge,back] (0.00000, 0.81650, 2.30940) -- (-0.70711, 0.40825, 1.15470);
            \draw[edge,back] (1.41421, 1.63299, 1.15470) -- (0.00000, 1.63299, 1.15470);
            \draw[edge,back] (0.00000, 1.63299, 1.15470) -- (-0.70711, 0.40825, 1.15470);
            \draw[edge,back] (0.00000, 1.63299, 1.15470) -- (0.70711, 1.22474, 0.00000);
            \draw[edge,back] (-0.70711, 0.40825, 1.15470) -- (0.00000, -0.81650, 1.15470);
            \draw[edge,back] (-0.70711, 0.40825, 1.15470) -- (0.00000, 0.00000, 0.00000);
            \draw[edge,back] (0.70711, 1.22474, 0.00000) -- (0.00000, 0.00000, 0.00000);
            \node[vertex,] at (0.00000, 1.63299, 1.15470)     {};
            \node[vertex,] at (-0.70711, 0.40825, 1.15470)     {};
                \node[subvertex] at (0.70711, 0.40825, 1.15470) {};
            \fill[facet] (0.70711, -0.40825, 2.30940) -- (1.41421, 0.81650, 2.30940) -- (0.00000, 0.81650, 2.30940) -- cycle {};
            \fill[facet] (2.12132, 0.40825, 1.15470) -- (1.41421, 0.81650, 2.30940) -- (1.41421, 1.63299, 1.15470) -- cycle {};
            \fill[facet] (1.41421, -0.81650, 1.15470) -- (0.70711, -0.40825, 2.30940) -- (1.41421, 0.81650, 2.30940) -- (2.12132, 0.40825, 1.15470) -- cycle {};
            \fill[facet] (0.00000, -0.81650, 1.15470) -- (0.70711, -0.40825, 2.30940) -- (1.41421, -0.81650, 1.15470) -- cycle {};
            \fill[facet] (1.41421, 0.00000, 0.00000) -- (2.12132, 0.40825, 1.15470) -- (1.41421, 1.63299, 1.15470) -- (0.70711, 1.22474, 0.00000) -- cycle {};
            \fill[facet] (1.41421, 0.00000, 0.00000) -- (2.12132, 0.40825, 1.15470) -- (1.41421, -0.81650, 1.15470) -- cycle {};
            \fill[facet] (0.00000, 0.00000, 0.00000) -- (0.00000, -0.81650, 1.15470) -- (1.41421, -0.81650, 1.15470) -- (1.41421, 0.00000, 0.00000) -- cycle {};
            \draw[edge] (1.41421, 0.81650, 2.30940) -- (0.00000, 0.81650, 2.30940);
            \draw[edge] (1.41421, 0.81650, 2.30940) -- (0.70711, -0.40825, 2.30940);
            \draw[edge] (1.41421, 0.81650, 2.30940) -- (1.41421, 1.63299, 1.15470);
            \draw[edge] (1.41421, 0.81650, 2.30940) -- (2.12132, 0.40825, 1.15470);
            \draw[edge] (0.00000, 0.81650, 2.30940) -- (0.70711, -0.40825, 2.30940);
            \draw[edge] (0.70711, -0.40825, 2.30940) -- (1.41421, -0.81650, 1.15470);
            \draw[edge] (0.70711, -0.40825, 2.30940) -- (0.00000, -0.81650, 1.15470);
            \draw[edge] (1.41421, 1.63299, 1.15470) -- (2.12132, 0.40825, 1.15470);
            \draw[edge] (1.41421, 1.63299, 1.15470) -- (0.70711, 1.22474, 0.00000);
            \draw[edge] (2.12132, 0.40825, 1.15470) -- (1.41421, -0.81650, 1.15470);
            \draw[edge] (2.12132, 0.40825, 1.15470) -- (1.41421, 0.00000, 0.00000);
            \draw[edge] (1.41421, -0.81650, 1.15470) -- (0.00000, -0.81650, 1.15470);
            \draw[edge] (1.41421, -0.81650, 1.15470) -- (1.41421, 0.00000, 0.00000);
            \draw[edge] (0.00000, -0.81650, 1.15470) -- (0.00000, 0.00000, 0.00000);
            \draw[edge] (0.70711, 1.22474, 0.00000) -- (1.41421, 0.00000, 0.00000);
            \draw[edge] (1.41421, 0.00000, 0.00000) -- (0.00000, 0.00000, 0.00000);
            \node[vertex,] at (1.41421, 0.81650, 2.30940)     {};
            \node[vertex,] at (0.00000, 0.81650, 2.30940)     {};
            \node[vertex,] at (0.70711, -0.40825, 2.30940)     {};
            \node[vertex,] at (1.41421, 1.63299, 1.15470)     {};
            \node[vertex,] at (2.12132, 0.40825, 1.15470)     {};
            \node[vertex,] at (1.41421, -0.81650, 1.15470)     {};
            \node[vertex,] at (0.00000, -0.81650, 1.15470)     {};
            \node[vertex,] at (0.70711, 1.22474, 0.00000)     {};
            \node[vertex,] at (1.41421, 0.00000, 0.00000)     {};
            \node[vertex,] at (0.00000, 0.00000, 0.00000)     {};
        \end{tikzpicture}
        \subcaption{$\SEP(K_4)^{(1234)}$}
    \end{subfigure}
    \caption{Fixed polytopes (orange) of $\SEP(K_4)$ (blue) fixed by permutations acting by coordinate permutation.}
    \label{fig:sep-k4-fixed}
\end{figure*}
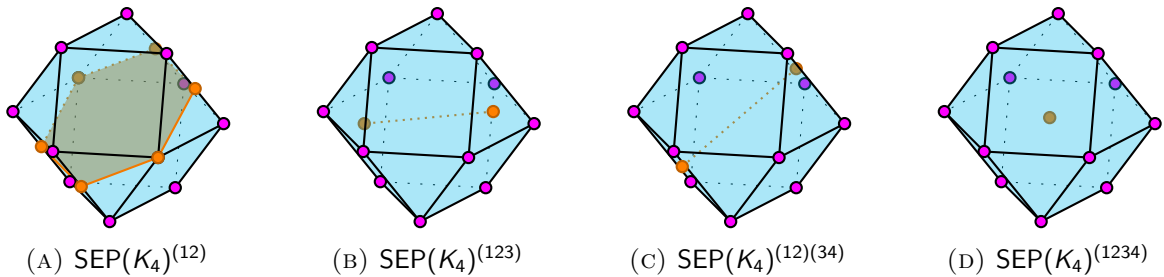

Additionally, in our discussion of the fixed polytopes of symmetric edge polytopes, we must first give attention to the set of actions under which these polytopes are invariant.
To this end, we define the restricted automorphism group of a polytope, which can intuitively be thought of as the group of distinct linear actions under which a polytope is invariant.
In Section \ref{sec:invariant_actions}, we investigate the restricted automorphism groups of symmetric edge polytopes, providing a partial description thereof in terms of the automorphism groups of the associated graphs, which we also define here.

\begin{definition}[Automorphism group of a graph]\label{def:auto-graph}
    Let $G$ be an unweighted graph with vertex set $\{v_1,\dots,v_n\}$ and edge set $E$, where $(v_i,v_j)\in E$ if $G$ contains an edge from $v_i$ to $v_j$.
    Then, the \defterm{automorphism group of $G$}, denoted $\Aut(G)$, is the subgroup of $S_{|V|}$ consisting of permutations $\sigma$ for which $(v_i,v_j)\in E$ if and only if $(v_{\sigma(i)},v_{\sigma(j)})\in E$.
\end{definition}

\begin{definition}[Restricted automorphism group of a polytope]\label{def:auto-polytope}
    Let $\P$ be a polytope in $\R^n$ with vertices $\{\vv_1,\dots,\vv_m\}$.
    Then, the \defterm{restricted automorphism group of $\P$}, denoted $\Aut(\P)$, is the group of distinct bijective maps on the points of $\P$ that are induced by the action of elements of $GL(\R^n)$ restricted to $\P$.

\end{definition}

Finally, several of our results describe fixed subpolytopes in terms of contractions of the graph associated with a symmetric edge polytope, and we define this concept here.

\begin{definition}[Contraction of a graph]\label{def:vertex-contraction}
    Let $G=(V,E)$ be a graph and let $S=\{S_1,\dots,S_m\}$ be a partition of the vertices of $G$.
    Then, $G'=(S,E')$ where $(S_i,S_j)\in E'$ if $(v_i,v_j)\in E$ for some $v_i\in S_i$ and $v_j\in S_j$ is the \defterm{contraction of $G$} induced by the partition $S$.
\end{definition}

\section{Invariant Actions on Symmetric Edge Polytopes} \label{sec:invariant_actions}

The majority of our paper will discuss the fixed polytopes of symmetric edge polytopes under coordinate permutations.
First, however, we identify the coordinate permutations under which symmetric edge polytopes are invariant, in terms of the associated graph, presented in Proposition \ref{prop:sep-perm}.
We also extend these results to obtain a full description of the restricted automorphism group of the symmetric edge polytope of the complete graph on $n$ vertices, presented in Proposition \ref{prop:sep-kn-auto}.

Additionally, we show in Theorem \ref{thm:sep-unitary-equiv} that symmetric edge polytopes are unitarily equivalent if and only if their associated graphs are isomorphic, in which case the equivalence can be described as a coordinate permutation.
Consequently, we show in Corollary \ref{cor:sep-unitary-auto} that the coordinate permutations under which a symmetric edge polytope is invariant, together with their negations, form the complete set of unitary symmetries of the polytope.

\begin{proposition}\label{prop:sep-perm}
    Let $G=([n],E)$ be a simple graph on $n$ vertices, and let $\sigma\in S_n$ act on $G$ by vertex permutation and on $\SEP(G)$ by coordinate permutation.
    Then, $\SEP(G)$ is invariant under $\sigma$ if and only if $G$ is invariant under $\sigma$.
\end{proposition}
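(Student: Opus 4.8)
The plan is to prove both implications directly from the vertex description $\SEP(G) = \conv\{\pm(\ve_i - \ve_j) : (i,j) \in E\}$, using the fact that a polytope is invariant under an invertible linear map exactly when the map permutes the vertex set of the polytope. The key observation is that the coordinate-permutation action of $\sigma \in S_n$ sends the vertex $\ve_i - \ve_j$ to $\ve_{\sigma(i)} - \ve_{\sigma(j)}$, so the action on vertices of $\SEP(G)$ mirrors the action of $\sigma$ on (oriented) edges of $G$.

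For the ``if'' direction, suppose $G$ is invariant under $\sigma$, i.e.\ $(i,j) \in E \iff (\sigma(i), \sigma(j)) \in E$. Then for each vertex $\pm(\ve_i - \ve_j)$ of $\SEP(G)$ with $(i,j) \in E$, we have $\sigma \cdot (\pm(\ve_i - \ve_j)) = \pm(\ve_{\sigma(i)} - \ve_{\sigma(j)})$, which is again a vertex of $\SEP(G)$ since $(\sigma(i), \sigma(j)) \in E$. Since $\sigma$ acts bijectively on $\R^n$ and maps the (finite) vertex set of $\SEP(G)$ into itself, it permutes that vertex set, hence $\sigma \cdot \SEP(G) = \SEP(G)$.

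For the ``only if'' direction, I would argue contrapositively: if $G$ is not invariant under $\sigma$, then there is an edge $(i,j) \in E$ with $(\sigma(i), \sigma(j)) \notin E$. The point $\ve_i - \ve_j$ is a vertex of $\SEP(G)$ — and here one should note that every generator $\pm(\ve_i-\ve_j)$ is genuinely a vertex, since these are all $\pm1$-combinations of distinct standard basis vectors and hence lie on the boundary of the cube $[-1,1]^n$, so none can be a convex combination of the others. Its image $\sigma \cdot (\ve_i - \ve_j) = \ve_{\sigma(i)} - \ve_{\sigma(j)}$ would have to be a point of $\SEP(G)$ if $\SEP(G)$ were $\sigma$-invariant. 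But every point of $\SEP(G)$ is a convex combination of the $\pm(\ve_k - \ve_\ell)$ with $(k,\ell) \in E$, and $\ve_{\sigma(i)} - \ve_{\sigma(j)}$, being a vertex of the cube $[-1,1]^n$ of the same type, can only be so expressed if it equals one of the generators, i.e.\ if $(\sigma(i),\sigma(j)) \in E$ — a contradiction. Hence $\SEP(G)$ is not $\sigma$-invariant.

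The main obstacle is the small technical point used in both directions: verifying that each generator $\pm(\ve_i - \ve_j)$ is actually a vertex of $\SEP(G)$, and that no point of $\SEP(G)$ of the form $\ve_a - \ve_b$ with $a \ne b$ arises unless $(a,b) \in E$. Both follow cleanly from the observation that $\SEP(G) \subseteq [-1,1]^n$ and that the points $\ve_a - \ve_b$ ($a \ne b$) are vertices of the cube $[-1,1]^n$, hence extreme points of any subpolytope containing them; I would record this as a short preliminary remark before giving the two implications.
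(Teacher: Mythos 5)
Your overall strategy is the same as the paper's: both directions come down to the observation that the coordinate permutation $\sigma$ sends the generator $\pm(\ve_i-\ve_j)$ to $\pm(\ve_{\sigma(i)}-\ve_{\sigma(j)})$, and your ``if'' direction is fine as written (indeed it only needs that $\sigma$ permutes the generating set, not that the generators are vertices). The genuine gap is in the preliminary lemma you invoke for the ``only if'' direction. For $n\ge 3$ the points $\ve_a-\ve_b$ with $a\ne b$ are \emph{not} vertices of the cube $[-1,1]^n$: they have $n-2$ zero coordinates and so lie in the relative interior of an $(n-2)$-dimensional face of the cube. Moreover, merely lying on the boundary of the cube does not prevent a point from being a convex combination of other boundary points (for instance $(1,-1,0)$ is the midpoint of $(1,-1,1)$ and $(1,-1,-1)$). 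So neither of the two facts you need --- that each generator is a vertex of $\SEP(G)$, and that a point of the form $\ve_a-\ve_b$ can lie in $\SEP(G)$ only when $(a,b)\in E$ --- is actually established by the cube argument.

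Both facts are true, and the standard repair is a linear-functional argument of exactly the kind the paper uses later in Theorem~\ref{thm:contraction-vert-desc}: for $a\ne b$ one checks that $\langle \ve_a-\ve_b,\,\pm(\ve_k-\ve_\ell)\rangle \le 1$ for every signed generator other than $\ve_a-\ve_b$ itself, whereas $\langle \ve_a-\ve_b,\,\ve_a-\ve_b\rangle = 2$. Hence $\ve_a-\ve_b$ cannot be a convex combination of generators unless it is one of them, which simultaneously shows that the vertex set of $\SEP(G)$ is exactly $\set{\pm(\ve_i-\ve_j)}{(i,j)\in E}$ and that $\ve_{\sigma(i)}-\ve_{\sigma(j)}\in\SEP(G)$ forces $(\sigma(i),\sigma(j))\in E$. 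With that substitution your contrapositive argument goes through and the proof is essentially the paper's, which simply takes for granted that the vertices of $\SEP(G)$ correspond to the edges of $G$ and that an invariant linear action permutes them; your instinct to make that point explicit is good, but the justification you gave for it would not survive scrutiny.
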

\begin{proof}
    First, we show that $\SEP(G)$ is invariant under $\sigma$ if $G$ is invariant under $\sigma$.
    Assume $G$ is invariant under the vertex permutation action of $\sigma$.
    Then, for any edge $(i,j)\in E$, $(\sigma(i),\sigma(j))\in E$ as well.
    Thus, for every vertex $\ve_i-\ve_j$ of $\SEP(G)$, its image, $\ve_{\sigma(i)}-\ve_{\sigma(j)}$, is also a vertex of $\SEP(G)$.

    Similarly, for every vertex $\ve_{\sigma(i)}-\ve_{\sigma(j)}$ of $\sigma\cdot\SEP(G)$, its preimage, $\ve_{\sigma^{-1}(\sigma(i))}-\ve_{\sigma^{-1}(\sigma(j))}=\ve_i-\ve_j$, is a vertex of $\SEP(G)$.
    Thus, the action of $\sigma$ is a bijection on the vertices of $\SEP(G)$, so $\SEP(G)$ is invariant under the action of $\sigma$.

    Now, we show conversely that $G$ is invariant under $\sigma$ if $\SEP(G)$ is invariant under $\sigma$.
    Assume $\SEP(G)$ is invariant under the action of $\sigma$.
    Then, for any vertex $\ve_i-\ve_j$ of $\SEP(G)$, its image under $\sigma$, $\ve_{\sigma(i)}-\ve_{\sigma(j)}$, is also a vertex of $\SEP(G)$, and vice versa.
    Thus, $(\sigma(i),\sigma(j))$ is an edge in $G$ whenever $(i,j)$ is an edge in $G$, and vice versa.
    This implies that $G$ is invariant under $\sigma$ if $\SEP(G)$ is invariant under $\sigma$.
\end{proof}

To obtain a complete description of the restricted automorphism group of the symmetric edge polytope of a complete graph, we will consider not only the vertices of the polytope, but also its facets.
To this end, we leverage a description of the facets of the symmetric edge polytope of the complete graph on $n$ vertices provided by Ardila \etal in Proposition 11 of \cite{siam-root}.
We restate this result in the language of symmetric edge polytopes here.

\begin{proposition}\label{prop:11}
    $\SEP(K_n)$ is an $(n-1)$-dimensional polytope in $\R^n$ contained within the hyperplane $\set{\vx\in\R^n}{{\bv1}^\top\vx=0}$.
    It has $2^n-2$ facets, which can be labeled by nonempty proper subsets of $[n]$ such that the facets with label $S\subset[n]$ is contained within the hyperplane $\set{\vx\in\R^n}{\ve_S^\top\vx=1}$.
\end{proposition}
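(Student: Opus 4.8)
The plan is to establish that, inside the hyperplane $H_0:=\set{\vx\in\R^n}{\bv1^\top\vx=0}$, the polytope $\SEP(K_n)$ is cut out precisely by the inequalities $\ve_S^\top\vx\le 1$ ranging over nonempty proper $S\subsetneq[n]$; every assertion in the statement follows from this. (One may alternatively cite \cite{siam-root}, but a short self-contained argument is available.) First I would dispense with the routine facts: each generator $\pm(\ve_i-\ve_j)$ satisfies $\bv1^\top\vx=0$, so $\SEP(K_n)\subseteq H_0$, and since $\bv0$ (by central symmetry) together with the $n-1$ linearly independent points $\ve_1-\ve_2,\dots,\ve_1-\ve_n$ all lie in $\SEP(K_n)$, it is $(n-1)$-dimensional. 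For a nonempty proper $S$ and any vertex $\ve_i-\ve_j$, the number $\ve_S^\top(\ve_i-\ve_j)$ lies in $\{-1,0,1\}$, equalling $1$ exactly when $i\in S$ and $j\in S^C$; hence $\ve_S^\top\vx\le 1$ is valid and the face $F_S:=\SEP(K_n)\cap\set{\vx}{\ve_S^\top\vx=1}$ is $\conv\set{\ve_i-\ve_j}{i\in S,\ j\in S^C}$, the Minkowski sum of the simplices $\conv\set{\ve_i}{i\in S}$ and $-\conv\set{\ve_j}{j\in S^C}$. Since these simplices lie in complementary coordinate subspaces, $\dim F_S=(|S|-1)+(|S^C|-1)=n-2=\dim\SEP(K_n)-1$, so $F_S$ is a facet.

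The crux is the reverse inclusion: every $\vx\in H_0$ satisfying $\ve_S^\top\vx\le 1$ for all nonempty proper $S$ lies in $\SEP(K_n)$. Given such $\vx\ne\bv0$ (the case $\vx=\bv0$ being trivial), set $I=\set{i}{x_i>0}$, $J=\set{j}{x_j<0}$, and $T=\sum_{i\in I}x_i=-\sum_{j\in J}x_j>0$; since $\bv1^\top\vx=0$, both $I$ and $J$ are nonempty and proper, and the inequality indexed by $S=I$ says exactly $T\le 1$. Then
\begin{equation*}
\vx=\sum_{i\in I}\sum_{j\in J}\frac{x_i(-x_j)}{T}\,(\ve_i-\ve_j)+(1-T)\,\bv0,
\end{equation*}
as one verifies by reading off the coefficient of each $\ve_i$ with $i\in I$ and of each $\ve_j$ with $j\in J$; the coefficients are nonnegative and sum to $1$, so this writes $\vx$ as a convex combination of vertices $\ve_i-\ve_j$ of $\SEP(K_n)$ together with $\bv0\in\SEP(K_n)$, whence $\vx\in\SEP(K_n)$. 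Notably only the single inequality $S=I$ is actually used.

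It remains to count. The $F_S$ are pairwise distinct: if $F_S=F_{S'}$ they have the same affine hull, which forces $\ve_S$ and $\ve_{S'}$ to restrict to the same linear functional on $H_0$, i.e. $\ve_S-\ve_{S'}\in\R\bv1$; since the entries of $\ve_S-\ve_{S'}$ lie in $\{-1,0,1\}$ and $S,S'$ are proper and nonempty, this gives $S=S'$. Because the $2^n-2$ valid inequalities $\ve_S^\top\vx\le 1$ together with $\bv1^\top\vx=0$ cut out $\SEP(K_n)$, every facet of $\SEP(K_n)$ is of the form $\SEP(K_n)\cap\set{\vx}{\ve_S^\top\vx=1}=F_S$; combining with the two preceding facts, the facets of $\SEP(K_n)$ are exactly the $2^n-2$ polytopes $F_S$, labeled as claimed. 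The one real obstacle is the reverse inclusion, which the displayed transportation-type identity settles cleanly.
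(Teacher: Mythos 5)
Your proposal is correct, but note that the paper does not prove this statement at all: it is quoted verbatim (in $\SEP$ language) from Proposition 11 of \cite{siam-root}, so there is no internal argument to compare against, and what you have supplied is a self-contained substitute for that citation. Your argument is sound at every step. The forward direction is routine: validity of $\ve_S^\top\vx\le1$ on the generators, and the identification of the face $F_S$ with $\conv\set{\ve_i-\ve_j}{i\in S,\ j\in S^C}$, whose dimension you correctly compute as $(|S|-1)+(|S^C|-1)=n-2$ via the Minkowski-sum-of-simplices description (the two simplices sit in complementary coordinate subspaces, so dimensions add), making each $F_S$ a facet. The crux, as you say, is the reverse inclusion, and your transportation-type identity
\begin{equation*}
\vx=\sum_{i\in I}\sum_{j\in J}\frac{x_i(-x_j)}{T}\,(\ve_i-\ve_j)+(1-T)\,\bv0
\end{equation*}
checks out: the coefficients are nonnegative, sum to $1$ because $T\le1$ (which uses only the single inequality $S=I$), and reproduce $\vx$ coordinatewise; since $\bv0\in\SEP(K_n)$ by central symmetry, this shows the $2^n-2$ inequalities together with $\bv1^\top\vx=0$ cut out the polytope, so every facet is some $F_S$. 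Your distinctness argument is also fine, with the small implicit step worth spelling out: $\aff(F_S)=\set{\vx\in\R^n}{\bv1^\top\vx=0,\ \ve_S^\top\vx=1}$ is an affine hyperplane of the ambient hyperplane not through the origin, hence spans it linearly, so two labels giving the same facet force $\ve_S-\ve_{S'}\in\R\bv1$, and properness rules out $\ve_S-\ve_{S'}=\pm\bv1$. Compared with the paper's citation, your route buys self-containedness (and makes visible that only one inequality per sign pattern is active in the reverse inclusion), at the cost of a page of verification that the citation avoids.
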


\begin{example}
    Consider the symmetric edge polytope associated with $K_{3}$ in Figure \ref{fig:sep-k3-labels}.
    Notice that $\text{dim}(\SEP(K_{3}))=2$ and there are exactly $6$ facet labels, each satisfying $S \subset [3]$.
\end{example}

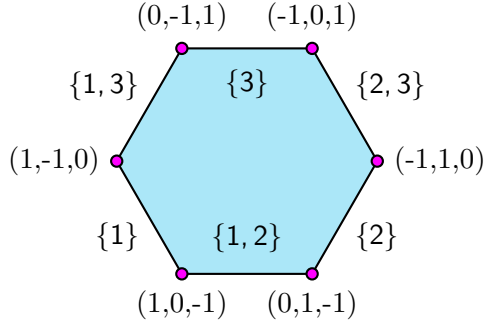
\begin{figure}
    \centering
    \begin{tikzpicture}[
                x={(-0.866cm,-0.5cm)},
                y={(0.866cm,-0.5cm)},
                z={(0cm,1cm)},
                scale = 1.0
                ]
                \coordinate (ab) at (1,-1,0);
                \coordinate (ac) at (1,0,-1);
                \coordinate (ba) at (-1,1,0);
                \coordinate (bc) at (0,1,-1);
                \coordinate (ca) at (-1,0,1);
                \coordinate (cb) at (0,-1,1);
                \fill[facet] (ab) -- (ac) -- (bc) -- (ba) -- (ca) -- (cb) -- cycle {};
                \draw[edge] (ab) -- (ac) node[pos=0.67, label={[black]left:$\{1\}$}] {};
                \draw[edge] (ac) -- (bc) node[pos=0.5, label={[black]above:$\{1,2\}$}] {};
                \draw[edge] (bc) -- (ba) node[pos=0.33, label={[black]right:$\{2\}$}] {};
                \draw[edge] (ba) -- (ca) node[pos=0.67, label={[black]right:$\{2,3\}$}] {};
                \draw[edge] (ca) -- (cb) node[pos=0.5, label={[black]below:$\{3\}$}] {};
                \draw[edge] (cb) -- (ab) node[pos=0.33, label={[black]left:$\{1,3\}$}] {};
                \node[vertex,label={[black]left:(1,-1,0)}] at (ab) {};
                \node[vertex,label={[black]below:(1,0,-1)}] at (ac) {};
                \node[vertex,label={[black]right:(-1,1,0)}] at (ba) {};
                \node[vertex,label={[black]below:(0,1,-1)}] at (bc) {};
                \node[vertex,label={[black]above:(-1,0,1)}] at (ca) {};
                \node[vertex,label={[black]above:(0,-1,1)}] at (cb) {};
            \end{tikzpicture}
    \caption{$\SEP(K_3)$ with facets labeled by nonempty proper subsets of $\{1,2,3\}$.}
    \label{fig:sep-k3-labels}
\end{figure}

We can further describe exactly which facets of $\SEP(K_n)$ are adjacent to each other.

\begin{lemma}\label{lemma:adjacent-dif-1}
    Two facets of $\SEP(K_n)$ are a common neighbor if and only if their labels, as described in Proposition \ref{prop:11}, differ in exactly one element.
\end{lemma}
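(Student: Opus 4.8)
The plan is to prove both directions using the explicit facet description from Proposition \ref{prop:11}. First I would set up coordinates: each facet of $\SEP(K_n)$ is labeled by a nonempty proper subset $S \subsetneq [n]$, with the facet $F_S$ lying in the hyperplane $H_S = \set{\vx \in \R^n}{\ve_S^\top \vx = 1}$, all inside the ambient hyperplane $\bv1^\top \vx = 0$. Two facets are adjacent (share a ridge, i.e.\ a codimension-$2$ face of the polytope) exactly when $F_S \cap F_T$ has dimension $n-3$. So the task reduces to showing $\dim(F_S \cap F_T) = n-3$ if and only if $|S \triangle T| = 1$, where $\triangle$ is symmetric difference.

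The key computational step is to understand the face $F_S \cap F_T = \SEP(K_n) \cap H_S \cap H_T$. A vertex $\ve_i - \ve_j$ of $\SEP(K_n)$ satisfies $\ve_S^\top(\ve_i - \ve_j) = [i \in S] - [j \in S]$, which equals $1$ iff $i \in S$ and $j \notin S$. So the vertices on facet $F_S$ are precisely the $\ve_i - \ve_j$ with $i \in S$, $j \in S^C$; there are $|S|\cdot|S^C|$ of them. The vertices on $F_S \cap F_T$ are the $\ve_i - \ve_j$ with $i \in S \cap T$ and $j \in S^C \cap T^C$. I would then argue that, generically, the affine span of these shared vertices has dimension $|S\cap T| + |S^C \cap T^C| - 1$ (since the vertices $\ve_i - \ve_j$ for $i$ ranging over a set $A$ and $j$ over a disjoint set $B$ affinely span a space of dimension $|A|+|B|-1$, this being essentially the vertex set of a product-of-simplices / the type-$A$ root polytope structure on $A \cup B$). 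Writing $a = |S \cap T|$, $b = |S \setminus T|$, $c = |T \setminus S|$, $d = |S^C \cap T^C|$, we have $a+b+c+d = n$ and the shared face has dimension $a + d - 1 = (n - b - c) - 1 = n - 1 - |S \triangle T|$. This equals $n-3$ precisely when $|S \triangle T| = 2$.

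Wait — I should double-check against the statement, which says the labels should \emph{differ in exactly one element}. The intended reading is that adjacency corresponds to $|S \triangle T| = 1$, i.e.\ $T = S \cup \{k\}$ or $T = S \setminus \{k\}$; but my dimension count gives $n-3$ for $|S\triangle T| = 2$. The resolution must be that ``differ in exactly one element'' is meant in the complemented sense used elsewhere in the paper: recall that facet labels come in complementary pairs and by the central symmetry of $\SEP(K_n)$ the facet $F_S$ is antipodal to $F_{S^C}$. Since no facet is adjacent to its own antipode, the relevant equivalence is on labels modulo complementation, and $|S \triangle T| = 2$ with $S,T$ incomparable-or-comparable must be reconciled with $|S \triangle T^C|$ or the ``off by one'' phrasing. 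I would therefore first pin down, via a small case check in $\SEP(K_3)$ and $\SEP(K_4)$ using Figures \ref{fig:sep-k3-labels} and \ref{fig:sep-k4-fixed}, exactly which combinatorial relation the authors intend — most likely that adjacency holds iff one label is obtained from the other by adding or removing a single element (equivalently $S \subset T$ or $T \subset S$ with $||S| - |T|| = 1$), in which case the genericity in my affine-span claim fails and the true shared-face dimension is larger; I'd recompute carefully in that nested case. The main obstacle, then, is not the adjacency-equals-dimension framework but getting the affine-dimension bookkeeping exactly right: I must verify when the shared vertices $\{\ve_i - \ve_j : i \in S\cap T,\ j \in S^C \cap T^C\}$ actually span the intersection $\aff(\SEP(K_n)) \cap H_S \cap H_T$ versus spanning something of strictly smaller dimension, which is where the ``differ by one element'' condition must enter. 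Once that case analysis is nailed down, the ``if and only if'' follows by comparing the resulting dimension to $n-3$ in each case.
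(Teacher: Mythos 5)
Your overall framework is exactly the paper's: adjacency means the intersection is a face of dimension $n-3$, and the vertices of $F_S\cap F_T$ are precisely the $\ve_i-\ve_j$ with $i\in S\cap T$, $j\in S^C\cap T^C$. The gap is the dimension count, and it is a genuine off-by-one error that derails the rest of the argument. For disjoint nonempty sets $A,B\subseteq[n]$, the polytope $\conv\set{\ve_i-\ve_j}{i\in A,\ j\in B}$ is affinely a product of simplices $\Delta_{|A|-1}\times\Delta_{|B|-1}$ and has dimension $|A|+|B|-2$, not $|A|+|B|-1$: every such point satisfies the two independent constraints $\ve_A^\top\vx=1$ and $\ve_B^\top\vx=-1$ (your figure of $|A|+|B|-1$ would be the dimension of the full root polytope on $A\cup B$, which also includes differences within $A$ and within $B$; a quick check with $A=\{1\}$, $B=\{2\}$, giving a single point, already shows the discrepancy). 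This is the dimension fact the paper imports from Theorem 1 of the cited root-polytope paper. With the correct count, writing $a=|S\cap T|$ and $d=|S^C\cap T^C|$, the shared face has dimension $a+d-2=n-2-|S\,\triangle\,T|$, which equals $n-3$ precisely when $|S\,\triangle\,T|=1$ --- i.e.\ exactly the lemma as stated, with ``differ in one element'' meaning one label is the other with a single element added or removed.

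Because of the off-by-one you arrive at the wrong criterion $|S\,\triangle\,T|=2$, and the entire second half of your proposal --- reinterpreting the statement modulo complementation, checking small cases to guess ``which relation the authors intend,'' and a promised but unexecuted recomputation in the nested case --- is an attempt to repair a contradiction that does not exist. As written, the proof is incomplete (the decisive bookkeeping is explicitly left as a to-do) and its stated conclusion is false; a sanity check on $\SEP(K_3)$, where adjacent edges of the hexagon carry labels such as $\{1\}$ and $\{1,2\}$, would have exposed the error immediately. Once you replace $|A|+|B|-1$ by $|A|+|B|-2$, your argument collapses to the paper's proof and no further case analysis is needed (degenerate cases where $S\cap T$ or $S^C\cap T^C$ is empty give an empty intersection and cannot yield dimension $n-3$).
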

\begin{proof}
    We say two facets of a $d$-dimensional polytope are adjacent if their intersection is a $(d-2)$-dimensional face.
    Consider two facets of $\SEP(K_n)$ with labels $S_1$ and $S_2$.
    Note that the facet with label $S_1$ is the convex hull of the vertices $\set{\ve_i-\ve_j}{i\in S_1,j\in S_1^C}$ and $S_2$ is the convex hull of the vertices $\set{\ve_i-\ve_j}{i\in S_2,j\in S_2^C}$.
    Thus, their intersection is the convex hull of the vertices $\set{\ve_i-\ve_j}{i\in S_1\cap S_2,j\in S_1^C\cap S_2^C}$.
    Applying Theorem 1 of \cite{pos-root}, we can see that this intersection has dimension $|S_1\cap S_2|+|S_1^C\cap S_2^C|-2$, so the intersection of the facets of $\SEP(K_n)$ with labels $S_1$ and $S_2$ has dimension $d-2=n-3$ if and only if $|S_1\cap S_2|+|S_1^C\cap S_2^C|=d=n-1$.

    Note that $|S_1\cap S_2^C|+|S_1^C\cap S_2|$ is the size of the differences between $S_1$ and $S_2$, and that \[|S_1\cap S_2|+|S_1^C\cap S_2^C|+|S_1\cap S_2^C|+|S_1^C\cap S_2|=n,\] so \[|S_1\cap S_2|+|S_1^C\cap S_2^C|=n-(|S_1\cap S_2^C|+|S_1^C\cap S_2|).\]
    Hence, $|S_1\cap S_2|+|S_1^C\cap S_2^C|=n-1$ if and only if the difference between $S_1$ and $S_2$ has size $1$, and so the facets of $\SEP(K_n)$ with labels $S_1$ and $S_2$ are adjacent if and only if $S_1$ and $S_2$ differ in exactly one element.
\end{proof}

These facts allows us to prove a lemma restricting the possible symmetries of $\SEP(K_n)$.

\begin{lemma}\label{lemma:size-1-labels}
    Consider the polytope $\SEP(K_n)$ for $n>2$.
    Label the facets of $\SEP(K_n)$ as described in Proposition \ref{prop:11}, and let $A\in GL(\R^n)$ act linearly on $\SEP(K_n)$.
    The action of $A$ on $\SEP(K_n)$ must either map all facets with labels of size 1 to facets with labels of size 1, or it must map all facets with labels of size 1 to facets with labels of size $n-1$.
    Further, the action of $A$ on $\SEP(K_n)$ is uniquely determined by its action on the facets of $\SEP(K_n)$ with labels of size 1.
\end{lemma}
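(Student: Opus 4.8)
The plan is to use that a linear automorphism $A$ of $\SEP(K_n)$ (an $A\in\GL(\R^n)$ with $A\cdot\SEP(K_n)=\SEP(K_n)$) permutes the facets and induces an automorphism of the facet-adjacency graph, which by Lemma~\ref{lemma:adjacent-dif-1} is the graph $\Gamma$ on the nonempty proper subsets of $[n]$ in which $S$ and $T$ are joined when they differ in exactly one element. In $\Gamma$ the neighbors of a singleton $\{i\}$ are exactly the two-element sets containing $i$ (one cannot delete an element from a singleton and remain in $\Gamma$), so $\{i\}$ has $n-1$ neighbors; likewise $[n]\setminus\{i\}$ has $n-1$ neighbors, while every $S$ with $2\le|S|\le n-2$ has $n$ neighbors. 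Since a graph automorphism preserves vertex degrees, $A$ must send each facet whose label is a singleton to a facet whose label is a singleton or the complement of a singleton.

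To rule out a mixture, I would compare numbers of common neighbors. Two distinct singletons $\{i\},\{j\}$ have the unique common neighbor $\{i,j\}$ in $\Gamma$. On the other hand, a common neighbor of $\{a\}$ and $[n]\setminus\{b\}$ must be a set of the form $\{a,k\}$ (obtained by adding an element to $\{a\}$) that is also of the form $[n]\setminus\{b,\ell\}$ (obtained by deleting an element from $[n]\setminus\{b\}$), which forces $n-2=2$; consequently $\{a\}$ and $[n]\setminus\{b\}$ have no common neighbor if $n\ne4$, and either none or exactly two if $n=4$, in particular never exactly one. Thus if $A$ sent one singleton to a singleton and another singleton to the complement of a singleton, it would carry a pair with common-neighbor count $1$ to a pair with common-neighbor count $0$ or $2$, a contradiction. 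Hence $A$ sends all singleton-labeled facets to singleton-labeled facets, or all of them to facets labeled by complements of singletons; this is the first assertion.

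For the second assertion I would reconstruct $A$ on the vertices. Write $F_S$ for the facet labeled $S$. Working in the hyperplane $W=\{\bv1^\top\vx=0\}$ containing $\SEP(K_n)$, the facet $F_S$ lies in $\{\ve_S^\top\vx=1\}$ and $\ve_S\equiv-\ve_{S^C}\pmod{\bv1}$, so $F_S$ is parallel to $F_{S^C}$; a short check using $n>2$ shows that no other pair of labels gives normals proportional modulo $\bv1$, so each facet has a unique parallel partner, that of $F_{\{i\}}$ being $F_{[n]\setminus\{i\}}$. A linear map sends parallel facets to parallel facets, so $A$'s action on the singleton-labeled facets also determines its action on their complements. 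Now $\ve_i-\ve_j$ lies on $F_S$ exactly when $i\in S$ and $j\notin S$, so it is the unique vertex of $\SEP(K_n)$ lying on both $F_{\{i\}}$ and $F_{[n]\setminus\{j\}}$. In the first case of the first assertion, write $A(F_{\{i\}})=F_{\{\pi(i)\}}$ with $\pi\in S_n$; then $A(F_{[n]\setminus\{i\}})=F_{[n]\setminus\{\pi(i)\}}$, so $A(\ve_i-\ve_j)$ is the unique vertex on $F_{\{\pi(i)\}}\cap F_{[n]\setminus\{\pi(j)\}}$, namely $\ve_{\pi(i)}-\ve_{\pi(j)}$. In the second case the same reasoning gives $A(\ve_i-\ve_j)=\ve_{\pi(j)}-\ve_{\pi(i)}$. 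Either way $A$ is determined on all vertices of $\SEP(K_n)$---hence on the polytope---by $\pi$ together with which case occurs, and both of these are read off from the action of $A$ on the singleton-labeled facets.

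I expect the main obstacle to be the first assertion, specifically making the common-neighbor count argument uniform in $n$: the facet-adjacency graph of $\SEP(K_4)$ genuinely behaves differently (two common neighbors instead of none), so $n=4$ has to be acknowledged as a separate, though routine, subcase; the rest is bookkeeping with the facet labels.
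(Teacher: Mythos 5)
Your proof is correct. For the first assertion it follows essentially the paper's route: both arguments hinge on Lemma \ref{lemma:adjacent-dif-1} and on counting common neighbors in the facet-adjacency structure (two singleton labels share exactly one common neighbor, a size-$1$/size-$(n-1)$ pair shares zero, or two when $n=4$, never one), the only cosmetic difference being that you first pin down the possible images of singleton-labeled facets via degrees in the adjacency graph where the paper uses the vertex counts $|S||S^C|$ of the facets; note your degree argument is vacuous for $n=3$, but there every label already has size $1$ or $n-1$, so nothing is lost. For the second assertion your route genuinely differs from the paper's: the paper observes that $A$ sends facet barycenters to facet barycenters and that the barycenters $\frac{n}{n-1}\ve_i-\frac{1}{n-1}\bv1$ of the singleton-labeled facets linearly span the span of $\SEP(K_n)$, so linearity finishes immediately; you instead pair each facet with its unique parallel partner $F_{S^C}$ (the proportionality-of-normals check you leave as "short" is routine and true), transport the action on singleton facets to their complements, and recover the image of each vertex $\ve_i-\ve_j$ as the unique vertex in $F_{\{\pi(i)\}}\cap F_{[n]\setminus\{\pi(j)\}}$ (or its reversed form in the second case). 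The paper's spanning argument is shorter; yours costs the parallel-partner lemma but yields more, namely the explicit conclusion that $A$ acts on vertices by $\ve_i-\ve_j\mapsto\ve_{\pi(i)}-\ve_{\pi(j)}$ or $\ve_i-\ve_j\mapsto\ve_{\pi(j)}-\ve_{\pi(i)}$, which already anticipates the classification of the restricted automorphism group in Proposition \ref{prop:sep-kn-auto}.
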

\begin{proof}
    Note that a facet with label $S$ has exactly $|S||S^C|$ vertices, namely $\ve_i-\ve_j$ for $i\in S$ and $j\in S^C$.
   
    In particular, there are $2n$ facets of $\SEP(K_n)$ with exactly $n-1$ vertices when $n>2$, corresponding to the labels of size $1$ and $n-1$, and any linear action $A$ on $\R^n$ under which $\SEP(K_n)$ is invariant must induce a bijection on the set of these $2n$ facets.
    
    Further, from Lemma \ref{lemma:adjacent-dif-1}, facets of $\SEP(K_n)$ are adjacent if and only if their labels differ in exactly one element.
   
    Consequently the only common neighbor of the facets with labels $S$ and $S'$ of size $1$ is the facet with label $S\cup S'$, and the only common neighbor of the facets with labels $S$ and $S'$ of size $n-1$ is the facet with label $S\cap S'$.
    Meanwhile, no pair of facets with labels of sizes $1$ and $n-1$ share exactly one common neighbor; if $n\ne4$, such facets share no common neighbors, while for $n=4$, they share two common neighbors.
    
    Because linear transformation preserves the adjacency between facets, thus $A$ must map all of the facets with labels of size 1 to facets with labels of size 1, or $A$ must map all of the facets with labels of size 1 to facets with labels of size $n-1$.
    
    Note that the average of the vertices of a facet with label $S=\{i\}$ is
    \begin{equation*}
        \ve_S-\frac{1}{n-1}\ve_{S^C} = \frac{n}{n-1}\ve_i-\frac{1}{n-1}\bv1.
    \end{equation*}
    Further, any vertex $\ve_i-\ve_j$ of $\SEP(K_n)$ can be written as a linear combination of these vectors since
    \begin{equation*}
        \frac{n-1}{n}\left(\frac{n}{n-1}\ve_i-\frac{1}{n-1}\bv1\right)-\frac{n-1}{n}\left(\frac{n}{n-1}\ve_j-\frac{1}{n-1}\bv1\right) = \ve_i-\ve_j.
    \end{equation*}
    Thus, the action of $A$ on the span of $\SEP(K_n)$ is determined by its action on these facets.
\end{proof}

We are now ready to fully describe the structure of the restricted automorphism group of $\SEP(K_n)$.
We do so in Proposition \ref{prop:sep-kn-auto}, by finding a set of distinct symmetries of $\SEP(K_n)$ which exhausts the possibilities provided by Lemma \ref{lemma:size-1-labels}.

\begin{proposition}\label{prop:sep-kn-auto}
    The restricted automorphism group of $\SEP(K_n)$ is isomorphic to $S_n\times S_2$ for $n>2$, and is isomorphic to $S_2$ for $n=2$.
\end{proposition}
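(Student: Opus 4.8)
The plan is to pin down $\Aut(\SEP(K_n))$ by sandwiching: exhibit an explicit copy of $S_n\times S_2$ inside it, and then cap its order using Lemma~\ref{lemma:size-1-labels}. The degenerate case $n=2$ is dispatched directly: $\SEP(K_2)=\conv\{\pm(\ve_1-\ve_2)\}$ is a line segment, whose only restricted automorphisms are the identity and the endpoint swap, giving $S_2$.

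For $n>2$ I would first build the subgroup. By Proposition~\ref{prop:sep-perm}, every coordinate permutation $\sigma\in S_n$ preserves $\SEP(K_n)$ (since $K_n$ is invariant under all of $S_n$), and the central symmetry $\vx\mapsto-\vx$ preserves it because the vertices $\ve_i-\ve_j$ occur in antipodal pairs. These two actions commute in $GL(\R^n)$, so together they define a homomorphism $S_n\times S_2\to\Aut(\SEP(K_n))$. To see this is injective, I would use that $\SEP(K_n)$ affinely spans the hyperplane $H=\set{\vx\in\R^n}{\bv1^\top\vx=0}$ (Proposition~\ref{prop:11}), and contains $\bv0$, so a restricted automorphism is determined by its linear action on $H$; thus it suffices to check the map $S_n\times S_2\to GL(H)$ has trivial kernel. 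Faithfulness on the $S_n$ factor is the standard fact that $\sigma(\ve_i-\ve_j)=\ve_i-\ve_j$ for all $i\neq j$ forces $\sigma=\id$. The only other possible kernel element would satisfy $\sigma|_H=-\id_H$, which would force $\sigma(\ve_1-\ve_2)=\ve_2-\ve_1$ and $\sigma(\ve_1-\ve_3)=\ve_3-\ve_1$, hence $\sigma(1)=2$ and $\sigma(1)=3$ simultaneously --- impossible when $n>2$. Hence $|\Aut(\SEP(K_n))|\ge 2\cdot n!$.

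Next I would establish the matching upper bound. Let $A\in GL(\R^n)$ act on $\SEP(K_n)$. By Lemma~\ref{lemma:size-1-labels}, $A$ either maps every facet with a size-$1$ label to a facet with a size-$1$ label, or maps every such facet to a facet with a size-$(n-1)$ label, and in both cases the action of $A$ on $\SEP(K_n)$ is determined by the images of the $n$ facets with size-$1$ labels. In the first case $A$ permutes these $n$ facets, so there are at most $n!$ restricted automorphisms of this kind. In the second case, compose with the central symmetry: since $-\id$ sends the facet labeled $S$ to the facet labeled $S^C$, it carries size-$1$ labels to size-$(n-1)$ labels, so $(-\id)\circ A$ lands in the first case; thus every second-case automorphism equals $-\id$ composed with a first-case one, again at most $n!$ of them. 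Therefore $|\Aut(\SEP(K_n))|\le 2\cdot n!$, and combined with the lower bound this forces equality and identifies the group with the exhibited copy of $S_n\times S_2$.

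The routine content is the two faithfulness checks and the facet-label bookkeeping; the one genuine subtlety --- and precisely why $n=2$ is excluded --- is that the lower-bound argument needs $-\id$ to be distinct from every coordinate permutation on $H$, which fails exactly when $n=2$ (there $-\id_H$ coincides with the transposition $(1\,2)$ restricted to $H$). I expect the main point requiring care is to apply Lemma~\ref{lemma:size-1-labels} to \emph{all} linear actions rather than just permutations, so that the upper bound genuinely controls the full restricted automorphism group and not merely its image in $S_n$.
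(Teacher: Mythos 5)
Your proposal is correct and follows essentially the same route as the paper: the upper bound $2\cdot n!$ via Lemma~\ref{lemma:size-1-labels}, the lower bound by exhibiting the coordinate permutations together with $-I$ (with the same $n>2$ check that $-I$ is not a permutation), and the $n=2$ case handled directly. The only cosmetic difference is that you conclude the isomorphism with $S_n\times S_2$ via an injective homomorphism from the outset, whereas the paper counts $2\cdot n!$ distinct automorphisms and then invokes a normal-subgroup/trivial-intersection argument; both are fine.
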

\begin{proof}
    When $n=2$, $\SEP(K_2)$ is the line segment from $(1,-1)$ to $(-1,1)$, which is readily seen to have a restricted automorphism group isomorphic to $S_2$.
    
    From Lemma \ref{lemma:size-1-labels}, we know each linear automorphism on $\SEP(K_n)$ can be described either by a bijection from the set of facets with labels of size 1 to the set of facets with labels of size 1, or a bijection from the set of facets with labels of size 1 to the set of facets with labels of size $(n-1)$.
    There are $n!$ bijections from the set of facets with labels of size $1$ to itself, and another $n!$ bijections from the set of facets with labels of size $1$ to facets with labels of size $n-1$.
    Thus, there are at most $2(n!)$ possible distinct linear actions on $\SEP(K_n)$.
    
    From Proposition \ref{prop:sep-perm}, we know that the restricted automorphism group of $\SEP(K_n)$ has a subgroup of order $n!$ corresponding to the set of all coordinate permutations since $K_n$ is invariant under vertex permutation.
    We can also see that $\SEP(K_n)$ is invariant under the action of $-I$, the negative identity transformation on $\R^n$, which maps each vertex $\ve_i-\ve_j$ of $\SEP(K_n)$ to the (distinct) vertex $\ve_j-\ve_i$.
    Further, we can show that the automorphism on $\SEP(K_n)$ induced by $-I$ is distinct from the automorphism induced by any coordinate permutation.
    
    Assume to the contrary that $-I$ acts on $\SEP(K_n)$ identically to the permutation matrix associated to some permutation $\sigma\in S_n$.
    Observe that $-I$ maps $\ve_i-\ve_j$ to $\ve_j-\ve_i$ for each $i\ne j$, so the cycle decomposition of $\sigma$ contains a 2-cycle $(i\ j)$ for each $i\ne j$.
    In particular, for $n>2$, the cycle decomposition contains the $2$-cycles $(12)$ and $(23)$, which is impossible, as these cycles are not disjoint.
    Thus, there can be no such $\sigma\in S_n$, and so $-I$ induces a distinct automorphism on $\SEP(K_n)$.

    Likewise, the negation of each coordinate permutation $P$, given by $(-I)P=P(-I)=-P$, also induces a distinct automorphism on $\SEP(K_n)$, providing $n!$ negated permutation automorphisms in addition to the $n!$ coordinate permutation automorphisms.
    This must be the complete restricted automorphism group of $\SEP(K_n)$, since it includes $2(n!)$ distinct automorphisms, which is the maximum number possible size of this group.
    Finally, since $\{I,-I\}$ and $S_n$ are both normal in the restricted automorphism group of $\SEP(K_n)$, and their free product is exactly the restricted automorphism group of $\SEP(K_n)$, and their intersection is solely the identity automorphism, we have that the restricted automorphism group of $\SEP(K_n)$ is isomorphic to $S_n\times S_2$.
\end{proof}

To obtain a similar result for symmetric edge polytopes of general connected graphs, we restrict our attention to unitary (rigid) elements of the restricted automorphism group.
Along the way, we also show that symmetric edge polytopes are unitarily equivalent exactly when their associated graphs are isomorphic.
This result complements the finding of D'Al\`i, Juhnke-Kubitzke, and Koch that symmetric edge polytopes are unimodularly equivalent exactly when their associated graphic matroids are isomorphic, even if the graphs themselves are not isomorphic \cite{DAliJuhnkeKoch}.
Further, our result can be used to provide an efficient reduction from the graph isomorphism problem-- which is conjectured to be NP-intermediate, i.e. neither P nor NP-complete \cite{graph-iso}-- to the problem of determining whether two symmetric edge polytopes are unitarily equivalent.
First, we prove a key lemma about the lattice spanned by the vertices of a symmetric edge polytope.

\begin{lemma}\label{lemma:sep-lattice}
    Let $G=([n],E)$ be a simple, connected graph with $n$ vertices.
    The vertices of $\SEP(G)$ define the $(n-1)$-dimensional lattice $\Lambda=\set{\vz\in\Z^n}{\langle\vz,\bv1\rangle=0}$.
\end{lemma}
\begin{proof}
    First, observe that $\vv\in\Lambda$ for every vertex $\vv$ of $\SEP(G)$ since $\SEP(G)$ is a lattice polytope and $\vv=\ve_i-\ve_j$ for some $i,j\in[n]$, so $\langle\vv,\bv1\rangle=\langle\ve_i,\bv1\rangle-\langle\ve_j,\bv1\rangle=1-1=0$.
    Thus, the lattice defined by the vertices of $\SEP(G)$ is contained within the lattice $\Lambda$.
    We will now show the reverse containment by expressing an arbitrary vector $\vz\in\Lambda$ as an integer linear combination of the vertices of $\SEP(G)$.

    Let $T_0$ be a spanning tree of $G$ with (arbitrary) root $r\in[n]$, and let $\vz_0=\bv0$.
    Then, for each $k\in\{0,\dots,n-1\}$, choose a leaf $i_k$ of $T_k$, let $j_k$ be the parent of $i_k$ in $T_k$, and define $T_{k+1}$ as $T_k$ with the vertex $i_k$ removed, and $\vz_{k+1}=\vz_k+\lambda_k(\ve_{i_k}-\ve_{j_k})$, where $\lambda_k$ is the $i_k$th coordinate of $\vz$ less the $i_k$th coordinate of $\vz_k$.
    Note that $\vz_k$ matches $\vz$ in coordinate $i_k$ and, since $i_k$ is not present in $T_{k'}$ for $k'>k$, $\vz_k$ matches $\vz$ in each of the coordinates $i_1,\dots,i_k$.
    Thus, $\vz_n$ matches $\vz$ in all but one coordinate, and since $\langle\vz,\bv1\rangle=0$, this ensures $\vz_n=\vz$.
    Further, $\vz_n$ is a sum of integer multiples of vertices of $\SEP(G)$, so $\vz_n=\vz$ is contained in the lattice defined by the vertices of $\SEP(G)$, so this lattice contains $\Lambda$.
\end{proof}

\begin{theorem}\label{thm:sep-unitary-equiv}
    Let $G$ and $G'$ be simple, connected graphs.
    Then, the following are equivalent:
    \begin{enumerate}
        \item $G$ is isomorphic to $G'$.
        \item $\SEP(G)$ is equivalent to $\SEP(G')$ via coordinate permutation.
        \item $\SEP(G)$ is unitarily equivalent to $\SEP(G')$.
    \end{enumerate}
    Further, if $\SEP(G)$ is unitarily equivalent to $\SEP(G')$ via a unitary transformation $U$, then $U$ is either a coordinate permutation or the negation of a coordinate permutation.
\end{theorem}
\begin{proof}
    First, we will show that $G$ is isomorphic to $G'$ if and only if $\SEP(G)$ is equivalent to $\SEP(G')$ by coordinate permutation.
    If $G$ is isomorphic to $G'$, then this isomorphism can be defined by some $\sigma\in S_n$ such that vertex $i$ of $G$ is mapped to vertex $\sigma(i)$ of $G'$, so $(i,j)$ is an edge in $G$ if and only if $(\sigma(i),\sigma(j))$ is an edge in $G'$.
    Thus the coordinate transformation $P_\sigma$ defined by $P_\sigma\ve_i=\ve_{\sigma(i)}$ maps $\SEP(G)$ to $\SEP(G')$.
    Conversely, if the coordinate transformation $P_\sigma$ maps $\SEP(G)$ to $\SEP(G')$, then $(i,j)$ is an edge of $G$ if and only if $(\sigma(i),\sigma(j))$ is an edge of $G'$, so $G$ is isomorphic to $G'$ via the mapping taking vertex $i$ of $G$ to vertex $\sigma(i)$ of $G'$.

    Next, recall that coordinate permutations are unitary, so if $\SEP(G)$ is equivalent to $\SEP(G')$ via coordinate permutation, then $\SEP(G)$ is unitarily equivalent to $\SEP(G')$.
    To prove the converse, we will show that any unitary transformation $U$ mapping $\SEP(G)$ to $\SEP(G')$ must be either a coordinate permutation or the negation thereof.
    Since $\SEP(G)$ and $\SEP(G')$ are invariant under negation, this is sufficient to show that unitary equivalence between $\SEP(G)$ and $\SEP(G')$ implies their equivalence via a coordinate permutation.

    Let $U$ be a unitary transformation mapping $\SEP(G)$ to $\SEP(G')$.
    First, we argue that $U\bv1=\pm\bv1$.
    Assume to the contrary that $U\bv1\ne\pm\bv1$.
    Then, $U^\top\bv1\ne\pm\bv1$ since otherwise $\bv1=UU^\top\bv1=\pm U\bv1\ne\pm\bv1$.
    So, noting that $\Span\SEP(G)=(\Span\{\bv1\})^\bot$, let $U^\top\bv1=\alpha\bv1+\beta\vx$ for some nonzero $\vx\in\SEP(G)$.
    Note that $\left\langle U\vx,\bv1\right\rangle=\vx^\top U^\top\bv1=\vx^\top(\alpha\bv1+\beta\vx)=\beta\vx^\top\vx\ne0$, so $U\vx\notin\SEP(G)$, so $\SEP(G)$ is not invariant under $U$.
    Thus, $U\bv1=\pm\bv1$.
    
    Then, Lemma \ref{lemma:sep-lattice} ensures $U$ maps points in the lattice $\Lambda=\set{\vz\in\Z^n}{\langle\vz,\bv1\rangle=0}$ to points in $\Lambda$.
    Since $U$ is unitary, lengths are preserved, so $\ve_i-\ve_j$ is mapped to $\ve_k-\ve_l$ for some $k,l\in[n]$ and each $i,j\in[n]$.
    Further, we claim that for every $i\in[n]$, the images of every vector $\ve_i-\ve_j$ for $j\in[n]\setminus\{i\}$ all have the same nonzero value in some coordinate $k$.
    It suffices to show that $U(\ve_i-\ve_j)$, $U(\ve_i-\ve_k)$, and $U(\ve_i-\ve_l)$ all share the same value in some nonzero coordinate for arbitrary distinct $i,j,k,l\in[n]$.
    First, note that the inner product of any pair of these three vectors is $1$ because $U$ is unitary, so each pair of these vectors must match in the value of exactly one nonzero coordinate.
    Let $U(\ve_i-\ve_j)$ match $U(\ve_i-\ve_k)$ in coordinate $c$ with value $v=\pm1$ and let $U(\ve_i-\ve_j)$ match $U(\ve_i-\ve_l)$ in coordinate $c'$ with value $v'=\pm1$.
    Then, $U(\ve_i-\ve_k)$ must match $U(\ve_i-\ve_l)$ in some coordinate $c^\pprime$ with value $v^\pprime=\pm1$.
    If $v^\pprime=v$, then $c^\pprime=c$ since this is the only coordinate of $U(\ve_i-\ve_k)$ with value $v$, so $U(\ve_i-\ve_j)$, $U(\ve_i-\ve_k)$, and $U(\ve_i-\ve_l)$ all match in coordinate $c$ with value $v$.
    Otherwise, $v^\pprime=v'$, so $c^\pprime=c'$ since this the only coordinate of $U(\ve_i-\ve_l)$ with value $v'$, so $U(\ve_i-\ve_j)$, $U(\ve_i-\ve_k)$, and $U(\ve_i-\ve_l)$ all match in coordinate $c'$ with value $v'$.

    Now, define $W=U$ if $U\bv1=\bv1$ or else $W=-U$ if $U\bv1=-\bv1$.
    Since $\SEP(G')$ is invariant under negation, $W=\pm U$ maps $\SEP(G)$ to $\SEP(G')$.
    Observe for each $i\in[n]$ that
    \begin{gather*}
        \ve_i=\frac{1}{n}\left(\bv1+\sum_{j\in[n]\setminus\{i\}}\ve_i-\ve_j\right),
        \qso\\
        W\ve_i=\frac{1}{n}\left(W\bv1+\sum_{j\in[n]\setminus\{i\}}W(\ve_i-\ve_j)\right)=\ve_k
    \end{gather*}
    for some $k\in[n]$.
    Since $W=\pm U$ is invertible, each $\ve_i$ must be mapped to a distinct $\ve_k$.
    Thus, $W=\pm U$ is a coordinate permutation, so $U$ is either a coordinate permutation or a negation thereof.
\end{proof}

Noting that every graph is trivially isomorphic to itself, we can leverage Theorem \ref{thm:sep-unitary-equiv} to describe the set of rigid symmetries of a symmetric edge polytope.

\begin{corollary}\label{cor:sep-unitary-auto}
    Let $G$ be a connected simple graph on $n$ vertices and $U$ be an orthogonal (unitary) linear transformation on $\R^n$.
    Then, $\SEP(G)$ is invariant under the action of $U$ if and only if $U$ is a coordinate permutation corresponding to a permutation of the vertices of $G$ under which $G$ is invariant, or the negation thereof.
\end{corollary}
\begin{proof}
    Take $G'=G$ in Theorem \ref{thm:sep-unitary-equiv}.
    A unitary transformation $U$ maps $\SEP(G)$ to itself only if $U$ is a coordinate permutation or the negation thereof, and Proposition \ref{prop:sep-perm} ensures the relevant coordinate permutation must correspond to a vertex permutation of $G$ under which $G$ is invariant.
    Conversely, Proposition \ref{prop:sep-perm} ensures $\SEP(G)$ is invariant under such coordinate permutations and their negations, and these transformations are easily seen to be unitary.
\end{proof}

To understand invariance of $\SEP(K_n)$ under $S_n$-action, we also examine the orbit structure of its $k$th dilate. Our next result provides insight on the stability of $S_n$ orbit counts, which provides insights for future work on the closed form for $\rvol(k\SEP(K_n))$.

\begin{theorem}\label{thm:orbit-count}
 Let $L_{n,k}=k\SEP(K_n)\cap\Z^n$ and $O_{n,k}$ be the number of $S_n$-orbits of $L_{n,k}$.
    Then, $O_{n,k}=\sum_{k'=0}^k\mathrm{p}(k')^2$ for $n \geq 2k$, where $\mathrm{p}$ is the partition function.
\end{theorem}
\begin{proof}
    Define $\tilde{L}_{n,k}=L_{n,k}\setminus L_{n,k-1}$ when $k>0$ and $\tilde{L}_{n,0}=L_{n,0}$.
    Further, let $\tilde{O}_{n,k}=|\tilde{L}_{n,k}/S_n|$.
    Then, the orbits of points in $\tilde{L}_{n,k}$ are entirely contained in this set, so $O_{n,k}=\sum_{k'=0}^k \tilde{O}_{n,k'}$.
    A point $\vz\in\Z^n$ is in $\tilde{L}_{n,k}$ if and only if the sum of its positive coordinates is $k$ and the sum of its negative coordinates is $-k$, and two such points are in the same $S_n$-orbit if and only if the sets of their coordinates are equal.
    There are $\mathrm{p}(k)$ sets of positive integers summing to $k$, so there are $\mathrm{p}(k)$ sets of positive numbers and $\mathrm{p}(k)$ sets of negative numbers which could form the set of positive coordinates and negative coordinates, respectively, of a point in $L_{n,k}$.
    Provided that $n\ge2k$ so that points in $\tilde{L}_{n,k}$ have enough coordinates to accommodate all pairs of partitions of $k$, the choice of these sets is independent, and uniquely describes the orbit of a point in $\tilde{L}_{n,k}$.
    Thus, there are $\tilde{O}_{n,k}=\mathrm{p}(k)^2$ for all $n\ge2k$, so $O_{n,k}=\sum_{k'=0}^k\tilde{O}_{n,k'}=\sum_{k'=0}^k \mathrm{p}(k')^2$ for $n\ge2k$.
\end{proof}

\section{Fixed Polytopes of Symmetric Edge Polytopes}\label{sec:fixed_polytopes}

In this section, we provide vertex descriptions of the fixed polytopes of symmetric edge polytopes fixed under coordinate permutation.
We further show that these fixed polytopes are combinatorially and, in fact, linearly equivalent to symmetric edge polytopes corresponding to contractions of the original graph.

First, we prove a pair of lemmas used in our subsequent results.

\begin{lemma}\label{lemma:mapped-vertices}
    Let $\P$ be a polytope whose span is contained in the domain of a linear mapping $\phi$.
    Then, $\phi(\P)$ is the convex hull of the images of the vertices of $\P$.
    This further implies that the vertices of $\phi(\P)$ are a subset of the images of the vertices of $\P$ under $\phi$.
\end{lemma}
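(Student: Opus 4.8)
The plan is to prove the two inclusions that together give $\phi(\P) = \conv(\phi(V))$, where $V$ denotes the vertex set of $\P$, and then read off the statement about vertices of $\phi(\P)$ as an immediate corollary. The key input is the Minkowski--Weyl fact that a polytope equals the convex hull of its vertices, i.e.\ $\P = \conv(V)$, together with the fact that a linear map commutes with taking convex combinations. Since the span of $\P$ lies in the domain of $\phi$, the expression $\phi(\vx)$ is defined for every $\vx \in \P$, so $\phi(\P)$ is a well-defined subset of the codomain.

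First I would show $\phi(\P) \subseteq \conv(\phi(V))$. Take $\vy \in \phi(\P)$, so $\vy = \phi(\vx)$ for some $\vx \in \P$. Write $\vx = \sum_{k} \lambda_k \vv_k$ as a convex combination of vertices $\vv_k \in V$ with $\lambda_k \ge 0$ and $\sum_k \lambda_k = 1$; this is possible because $\P = \conv(V)$. By linearity of $\phi$,
\[
\vy = \phi\!\left(\sum_k \lambda_k \vv_k\right) = \sum_k \lambda_k \,\phi(\vv_k),
\]
which exhibits $\vy$ as a convex combination of points of $\phi(V)$, hence $\vy \in \conv(\phi(V))$. For the reverse inclusion $\conv(\phi(V)) \subseteq \phi(\P)$, note first that each $\phi(\vv_k) \in \phi(\P)$ since $\vv_k \in \P$; then observe that $\phi(\P)$ is itself convex, being the image of the convex set $\P$ under a linear (hence affine) map. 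A convex set containing $\phi(V)$ contains its convex hull, so $\conv(\phi(V)) \subseteq \phi(\P)$. Combining the two inclusions gives $\phi(\P) = \conv(\phi(V))$, which is the first assertion.

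For the second assertion, recall that the vertices of any polytope $Q$ form the unique minimal subset of $Q$ whose convex hull is $Q$; equivalently, every point that generates $Q$ as a convex hull must include all vertices of $Q$. Since we have just shown $\phi(\P) = \conv(\phi(V))$, the set $\phi(V)$ generates $\phi(\P)$, so the vertex set of $\phi(\P)$ must be contained in $\phi(V)$, i.e.\ every vertex of $\phi(\P)$ is the image under $\phi$ of some vertex of $\P$. This is exactly the claimed containment. I do not expect any real obstacle here; the only point requiring a word of care is the hypothesis that $\Span(\P)$ lies in the domain of $\phi$, which is precisely what guarantees $\phi(\P)$ makes sense as a polytope in the first place and lets the convex-combination computation go through without domain issues.
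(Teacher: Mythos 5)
Your proof is correct and follows essentially the same route as the paper: both establish $\phi(\P)=\conv\{\phi(\vv_1),\dots,\phi(\vv_n)\}$ by pushing convex combinations forward through the linear map and checking the reverse inclusion (you via convexity of $\phi(\P)$, the paper by pulling the combination back, which amounts to the same linearity computation). Your explicit justification of the final vertex-containment claim, via minimality of the vertex set as a generating set, is a small but welcome addition to what the paper leaves as an assertion.
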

\begin{proof}
    Let $\vv_1,\dots,\vv_n$ be the vertices of $\P$.
    If $\vx\in\P$, then $\vx$ can be written as a convex combination of the vertices of $\P$ as $\vx=\lambda_1\vv_1+\dots+\lambda_n\vv_n$, where $0\le\lambda_1,\dots,\lambda_n\le1$ and $\lambda_1+\dots+\lambda_n=1$.
    Observe that
    \begin{equation*}
        \phi(\vx) = \phi(\lambda_1\vv_1+\dots+\lambda_n\vv_n) = \lambda_1\phi(\vv_1)+\dots+\lambda_n\phi(\vv_n),
    \end{equation*}
    so $\phi(\vx)$ is a convex combination of the images of the vertices of $\P$ under $\phi$.
    Conversely, if $\vx$ is a convex combination of $\phi(\vv_1),\dots,\phi(\vv_n)$, then $\vx$ is the image of some convex combination of $\vv_1,\dots,\vv_n$,
    \begin{equation*}
        \vx=\lambda_1\phi(\vv_1)+\dots+\lambda_n\phi(\vv_n)=\phi(\lambda_1\vv_1+\dots+\lambda_n\vv_n).
    \end{equation*}
    Thus, $\phi(\P)=\conv\{\phi(\vv_1),\dots,\phi(\vv_n)\}$, as desired.
\end{proof}

\begin{lemma}\label{lemma:psi-mapping}
    Let $\sigma\in S_n$ have cycle decomposition $\sigma=\sigma_1\cdots\sigma_m$ and let $\P$ be a $\sigma$-invariant polytope.
    Additionally, define
    \begin{equation*}
        \psi_\sigma(\vx) = \frac{1}{|\sigma|}\sum_{i=1}^{|\sigma|}\sigma^i\cdot\vx = \sum_{k=1}^{m}\frac{\sum_{j\in\sigma_k}x_j}{|\sigma_k|}\ve_{\sigma_k}.
    \end{equation*}
    Then, $\P^\sigma=\psi_\sigma(\P)$.
\end{lemma}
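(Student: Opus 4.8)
The plan is to prove the set equality $\P^\sigma = \psi_\sigma(\P)$ by establishing two inclusions, after first recording two basic facts about the map $\psi_\sigma$: that it is a linear projection onto the fixed subspace of $\sigma$, and that it restricts to the identity on that subspace. For the first fact, I would verify directly from the averaging formula $\psi_\sigma(\vx) = \frac{1}{|\sigma|}\sum_{i=1}^{|\sigma|}\sigma^i\cdot\vx$ that $\sigma\cdot\psi_\sigma(\vx) = \psi_\sigma(\vx)$, since $\sigma$ acting on the sum merely cyclically permutes the terms $\sigma^i\cdot\vx$ (using $\sigma^{|\sigma|+1}\cdot\vx = \sigma\cdot\vx$); the closed form $\sum_{k=1}^m \frac{\sum_{j\in\sigma_k}x_j}{|\sigma_k|}\ve_{\sigma_k}$ makes this transparent, as it is constant on each cycle block. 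For the second fact, if $\vx$ already satisfies $\sigma\cdot\vx = \vx$, then $\sigma^i\cdot\vx = \vx$ for all $i$, so every term in the average equals $\vx$ and $\psi_\sigma(\vx)=\vx$; equivalently, from the closed form, $\sigma\cdot\vx=\vx$ forces $x_j$ to be constant as $j$ ranges over each cycle $\sigma_k$, and the block-average then returns that constant.

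With these in hand, the inclusion $\psi_\sigma(\P)\subseteq \P^\sigma$ goes as follows. Take $\vx\in\P$. Since $\P$ is $S_n$-invariant, $\sigma^i\cdot\vx\in\P$ for every $i$, and $\psi_\sigma(\vx)$ is a convex combination (with equal weights $1/|\sigma|$) of these points, hence lies in $\P$ by convexity. By the first fact above, $\psi_\sigma(\vx)$ is fixed by $\sigma$, so $\psi_\sigma(\vx)\in\P^\sigma$. Conversely, for $\P^\sigma\subseteq\psi_\sigma(\P)$, take $\vx\in\P^\sigma$; then $\vx\in\P$, so $\psi_\sigma(\vx)\in\psi_\sigma(\P)$, and by the second fact $\psi_\sigma(\vx)=\vx$, so $\vx\in\psi_\sigma(\P)$. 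Combining the two inclusions gives $\P^\sigma=\psi_\sigma(\P)$.

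The two displayed expressions for $\psi_\sigma$ should be reconciled en route — I would check that the averaging definition and the block-sum formula agree by observing that for a coordinate $j$ lying in the cycle $\sigma_k$, the orbit $\{\sigma^i(j) : 1\le i\le|\sigma|\}$ hits each element of $\sigma_k$ exactly $|\sigma|/|\sigma_k|$ times, so the $j$-th coordinate of $\frac{1}{|\sigma|}\sum_i \sigma^i\cdot\vx$ is $\frac{1}{|\sigma|}\cdot\frac{|\sigma|}{|\sigma_k|}\sum_{\ell\in\sigma_k}x_\ell = \frac{1}{|\sigma_k|}\sum_{\ell\in\sigma_k}x_\ell$, matching the coefficient of $\ve_{\sigma_k}$. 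I do not anticipate a serious obstacle here; the only point requiring a little care is keeping the convention from the preliminaries straight — that a fixed point of $\sigma$ in a multi-element cycle must be a fixed point of the permutation but the formula still treats singleton cycles correctly since $\ve_{\sigma_k}=\ve_j$ and the average over a one-element block is the identity. The genuinely substantive content is simply that averaging over the cyclic group generated by $\sigma$ is the orthogonal (here, equal-weight) projection onto the $\sigma$-fixed subspace, and convexity of $\P$ together with $S_n$-invariance keeps this average inside $\P$.
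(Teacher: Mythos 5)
Your proof is correct and follows essentially the same route as the paper's: show $\psi_\sigma$ fixes everything in the $\sigma$-fixed subspace and lands in that subspace, then get $\psi_\sigma(\P)\subseteq\P^\sigma$ from $S_n$-invariance plus convexity (the averaging step) and $\P^\sigma\subseteq\psi_\sigma(\P)$ from $\psi_\sigma$ acting as the identity on fixed points. Your extra verification that the averaging definition agrees with the block-sum formula, and your explicit appeal to convexity, are details the paper leaves implicit, but the argument is the same.
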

\begin{proof}
    First, observe that $\sigma\cdot\ve_{\sigma_i}=\ve_{\sigma_i}$ for each $\sigma_i$ in the cycle decomposition of $\sigma$.
    Since $\psi_\sigma(\vx)$ can be expressed as a linear combination of $\ve_{\sigma_1},\dots,\ve_{\sigma_m}$, this implies $\sigma\cdot\psi_\sigma(\vx)=\psi_\sigma(\vx)$ for all $\vx\in\R^n$.
    Since $\P$ is $S_n$-invariant, we also know that $\sigma\cdot\vx\in\P$ for all $\vx\in\P$, and thus $\psi_\sigma(\P)\subseteq\P$.
    Therefore, $\psi_\sigma(\P)\subseteq\P^\sigma$, since $\phi_{\sigma}(\P)$ is a subset of $\P$ that is fixed under the action of $\sigma$.
    Conversely, for any $\vx\in\P^\sigma$, note that $\sigma\cdot\vx=\vx$, and thus $\sigma^i\cdot\vx=\vx$ for all integer powers $i$.
    This implies
    \begin{equation*}
        \psi_\sigma(\vx) = \frac{1}{|\sigma|}\sum_{i=1}^{|\sigma|}\sigma^i\cdot\vx
        = \frac{1}{|\sigma|}\sum_{i=1}^{|\sigma|}\vx
        = \frac{|\sigma|}{|\sigma|}\vx=\vx,
    \end{equation*}
    so $\P^\sigma\subseteq\psi_\sigma(\P)$.
    Thus, we have $\P^\sigma=\psi_\sigma(\P)$.
\end{proof}

Lemma \ref{lemma:mapped-vertices}  and Lemma \ref{lemma:psi-mapping} together allow us to obtain a generating set for the convex hull of polytopes fixed by coordinate permutations.
We can then identify the vertices of each fixed polytope from this set, providing an explicit vertex description, which we record in Theorem \ref{thm:contraction-vert-desc}.

\begin{theorem}\label{thm:contraction-vert-desc}
    Let $G=([n],E)$ be a graph on $n$ vertices and $\sigma\in S_n$ have cycle decomposition $\sigma=\sigma_1\cdots\sigma_m$.
    Then, $\SEP(G)^\sigma$ has vertex set
    \begin{equation*}
        \set{\frac{1}{|\sigma_i|}\ve_{\sigma_i}-\frac{1}{|\sigma_j|}\ve_{\sigma_j}}{(a,b)\in E\text{ for some }a\in\sigma_i\text{ and }b\in\sigma_j}.
    \end{equation*}
\end{theorem}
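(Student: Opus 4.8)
The plan is to combine Lemmas~\ref{lemma:psi-mapping} and~\ref{lemma:mapped-vertices} to produce an explicit finite generating set for $\SEP(G)^\sigma$, and then prune it down to its genuine vertices. By Lemma~\ref{lemma:psi-mapping}, $\SEP(G)^\sigma=\psi_\sigma(\SEP(G))$, and by Lemma~\ref{lemma:mapped-vertices} this is the convex hull of the images under $\psi_\sigma$ of the vertices $\pm(\ve_a-\ve_b)$, $(a,b)\in E$, of $\SEP(G)$, with the vertices of $\psi_\sigma(\SEP(G))$ occurring among these images. So the first step is the direct computation using the closed form $\psi_\sigma(\vx)=\sum_{k=1}^m\frac{1}{|\sigma_k|}(\sum_{l\in\sigma_k}x_l)\ve_{\sigma_k}$ from Lemma~\ref{lemma:psi-mapping}: if $a$ and $b$ lie in the same block of $\sigma$, then $\psi_\sigma(\ve_a-\ve_b)=\mathbf{0}$, while if $a\in\sigma_i$ and $b\in\sigma_j$ with $i\ne j$, then $\psi_\sigma(\ve_a-\ve_b)=\frac{1}{|\sigma_i|}\ve_{\sigma_i}-\frac{1}{|\sigma_j|}\ve_{\sigma_j}=:v_{ij}$. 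Hence $\SEP(G)^\sigma$ is the convex hull of $\{\mathbf{0}\}\cup\{\pm v_{ij}:(\sigma_i,\sigma_j)\in E'\}$; since $E'$ is undirected we have $v_{ji}=-v_{ij}$, so this last collection is precisely $\{\mathbf{0}\}$ together with the claimed vertex set.

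The second step is to discard $\mathbf{0}$. Because $\SEP(G)$ is centrally symmetric and $\psi_\sigma$ is linear, $\SEP(G)^\sigma$ is centrally symmetric about $\mathbf{0}$; consequently, as long as $E'\ne\emptyset$, one has $\mathbf{0}=\tfrac12 v_{ij}+\tfrac12(-v_{ij})$ in the relative interior, so $\mathbf{0}$ is not a vertex. (If $E'=\emptyset$, i.e.\ every edge of $G$ is internal to a block of $\sigma$, then $\SEP(G)^\sigma=\{\mathbf{0}\}$ and the statement is immediate.) By Lemma~\ref{lemma:mapped-vertices} it therefore remains only to show that each $v_{ij}$ with $(\sigma_i,\sigma_j)\in E'$ is genuinely a vertex, not merely a point of the generating set.

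For this the key step is to exhibit, for a fixed pair $(\sigma_i,\sigma_j)\in E'$, a linear functional uniquely maximized at $v_{ij}$ over the whole generating set. I would take $f(\vx)=(\ve_{\sigma_i}-\ve_{\sigma_j})^\top\vx$. Since the blocks $\sigma_1,\dots,\sigma_m$ are pairwise disjoint, $\ve_{\sigma_i}^\top\ve_{\sigma_k}=|\sigma_i\cap\sigma_k|$ equals $|\sigma_i|$ when $k=i$ and $0$ otherwise, so $f(v_{kl})$ reduces to an explicit expression in the indicators $[i=k],[j=l],[i=l],[j=k]$; one checks that $f(v_{kl})=2$ exactly when $(k,l)=(i,j)$, that $f(v_{kl})=-2$ exactly when $(k,l)=(j,i)$, and that $f$ takes values in $\{-1,0,1\}$ on every other generator (with $f(\mathbf{0})=0$). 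Thus $f$ is maximized over the generating set uniquely at $v_{ij}$, so $v_{ij}\notin\conv(\text{other generators})$ and is a vertex of $\SEP(G)^\sigma$. Together with the previous paragraph, this identifies the vertex set exactly as claimed.

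I do not anticipate a serious obstacle: once Lemmas~\ref{lemma:psi-mapping} and~\ref{lemma:mapped-vertices} are in hand, the argument is essentially bookkeeping. The two points that need care are (i) correctly tracking the image of an edge internal to a cycle—it collapses to $\mathbf{0}$, which must then be argued away as a non-vertex—and (ii) verifying that the separating functional $f$ is \emph{uniquely} maximized, i.e.\ that no ordered pair $(k,l)$ of distinct blocks joined by an edge other than $(i,j)$ yields $f(v_{kl})=2$, which is exactly where the disjointness of the blocks together with $i\ne j$ is used.
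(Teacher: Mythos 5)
Your proposal is correct and follows essentially the same route as the paper: apply Lemmas~\ref{lemma:psi-mapping} and~\ref{lemma:mapped-vertices}, compute $\psi_\sigma(\pm(\ve_a-\ve_b))$, discard the zero image, and certify each remaining generator as a vertex via the functional $\vx\mapsto(\ve_{\sigma_i}-\ve_{\sigma_j})^\top\vx$, which is exactly the paper's separating functional. Your explicit treatment of $\mathbf{0}$ and of the degenerate case $E'=\emptyset$ is a minor (and welcome) elaboration of what the paper handles with the phrase ``omitting the redundant case.''
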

\begin{proof}
    Define $\psi_\sigma$ as in Lemma \ref{lemma:psi-mapping}.
    For each vertex $\ve_i-\ve_j$ of $\SEP(G)$, note that $\psi_\sigma(\ve_i-\ve_j)=\frac{1}{|\sigma_I|}\ve_{\sigma_I}-\frac{1}{|\sigma_J|}\ve_{\sigma_J}$, where $i\in\sigma_I$ and $j\in\sigma_J$, which is nonzero if and only if $I\ne J$.
    We show that the nonzero points of this form are exactly the vertices of $\SEP(G)^\sigma$.

    First, combining Lemmas \ref{lemma:mapped-vertices} and \ref{lemma:psi-mapping}  ensures
    \begin{equation*}
        \P^\sigma = \psi_\sigma(\P)
        = \conv\set{\phi(\ve_i-\ve_j)}{(i,j)\in E}.
    \end{equation*}
    Using the form of $\phi(\ve_i-\ve_j)$ noted above, and omitting the redundant case when $\phi(\ve_i-\ve_j)=\bv0$, we can see that
    \begin{equation*}
        \P^\sigma=\conv\set{\frac{1}{|\sigma_i|}\ve_{\sigma_i}-\frac{1}{|\sigma_j|}\ve_{\sigma_j}}{(\sigma_i,\sigma_j)\in E'}.
    \end{equation*}
    It remains only to show that each $\frac{1}{|\sigma_i|}\ve_{\sigma_i}-\frac{1}{|\sigma_j|}\ve_{\sigma_j}$ is a vertex of their convex hull.
    Note that each element $\frac{1}{|\sigma_i|}\ve_{\sigma_i}-\frac{1}{|\sigma_j|}\ve_{\sigma_j}$ is the unique maximizer among these generators of the linear functional $\vx\mapsto\left\langle\ve_{\sigma_i}-\ve_{\sigma_j},\vx\right\rangle$, and is thus a vertex of the convex hull.
    Hence, the vertices of $\SEP(G)^\sigma=\psi_\sigma(\SEP(G))$ are exactly $\set{\frac{1}{|\sigma_i|}\ve_{\sigma_i}-\frac{1}{|\sigma_j|}\ve_{\sigma_j}}{(\sigma_i,\sigma_j)\in E'}$.
\end{proof}

From this vertex description, we obtain the equivalence of the fixed polytope to the symmetric edge polytope of a smaller graph.

\begin{corollary}\label{cor:contraction-equiv}
    Let $G=([n],E)$ be a graph on $n$ vertices and $\sigma\in S_n$ have cycle decomposition $\sigma=\sigma_1\cdots\sigma_m$.
    Additionally, viewing $\sigma$ as a partition of $[n]$, let $G'=(\sigma,E')$ be the contraction of $G$ induced by $\sigma$.
    Then, $\SEP(G)^\sigma$ is linearly and combinatorially equivalent to $\SEP(G')$.
\end{corollary}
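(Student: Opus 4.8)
The plan is to build an explicit linear isomorphism that carries $\SEP(G)^\sigma$ onto $\SEP(G')$, reading off the vertices of the former from Theorem \ref{thm:contraction-vert-desc}. Write $[m]=\{1,\dots,m\}$ for the index set of the blocks $\sigma_1,\dots,\sigma_m$, and identify the vertex set of the contraction $G'$ with $[m]$, so that $\SEP(G')\subset\R^m$ is the convex hull of the vectors $\pm(\ve_i-\ve_j)$ over edges $(i,j)\in E'$. The vectors $\ve_{\sigma_1},\dots,\ve_{\sigma_m}\in\R^n$ have pairwise disjoint supports, hence are linearly independent; let $V=\Span\{\ve_{\sigma_1},\dots,\ve_{\sigma_m}\}$, the $m$-dimensional subspace of $\R^n$ which, by Theorem \ref{thm:contraction-vert-desc}, contains $\SEP(G)^\sigma$.

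Next I would define a linear map $T\colon V\to\R^m$ on this basis by $T(\ve_{\sigma_i})=|\sigma_i|\,\ve_i$ for each $i\in[m]$. Since it sends a basis of $V$ to a basis of $\R^m$, the map $T$ is a linear isomorphism, and it satisfies $T\bigl(\tfrac{1}{|\sigma_i|}\ve_{\sigma_i}-\tfrac{1}{|\sigma_j|}\ve_{\sigma_j}\bigr)=\ve_i-\ve_j$. By Theorem \ref{thm:contraction-vert-desc}, the vertices of $\SEP(G)^\sigma$ are exactly the vectors $\tfrac{1}{|\sigma_i|}\ve_{\sigma_i}-\tfrac{1}{|\sigma_j|}\ve_{\sigma_j}$ with $(\sigma_i,\sigma_j)\in E'$, so Lemma \ref{lemma:mapped-vertices} gives $T(\SEP(G)^\sigma)=\conv\set{\ve_i-\ve_j}{(\sigma_i,\sigma_j)\in E'}$. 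Because $E'$ consists of unordered pairs, the condition $(\sigma_i,\sigma_j)\in E'$ is the same as $(\sigma_j,\sigma_i)\in E'$, so this convex hull is precisely $\conv\set{\pm(\ve_i-\ve_j)}{(i,j)\in E'}=\SEP(G')$.

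Finally I would package this as the asserted equivalence: since $T$ is a linear isomorphism of $V$ onto $\R^m$ carrying $\SEP(G)^\sigma$ onto $\SEP(G')$, it restricts to a bijective affine map of $\aff(\SEP(G)^\sigma)$ onto $\aff(\SEP(G'))$ taking one polytope to the other, which is exactly linear equivalence; combinatorial equivalence then follows automatically, as an invertible linear map induces an isomorphism of face lattices. I do not expect a real obstacle: the whole argument rests on Theorem \ref{thm:contraction-vert-desc}, and the only points needing a word of care are that the $\ve_{\sigma_i}$ are linearly independent (disjoint supports), that $T$ is well-defined and invertible on $V$, and that the symmetry of $E'$ is what reproduces the ``$\pm$'' in the definition of $\SEP(G')$.
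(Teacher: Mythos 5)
Your proposal is correct and follows essentially the same route as the paper: both define the linear isomorphism $\ve_{\sigma_i}\mapsto|\sigma_i|\ve_i$ on the fixed subspace (the span of the $\ve_{\sigma_i}$), use the vertex description from Theorem \ref{thm:contraction-vert-desc} to see it carries $\SEP(G)^\sigma$ onto $\SEP(G')$, and deduce combinatorial equivalence from linear equivalence. Your extra remarks (invoking Lemma \ref{lemma:mapped-vertices} and the symmetry of $E'$ producing the $\pm$ vertices) are just slightly more explicit versions of steps the paper leaves implicit.
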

\begin{proof}
    Note that $\{\ve_{\sigma_i}\}_{i\in[m]}$ is an orthogonal basis of $(\R^n)^\sigma$.
    Then, define the linear transformation $\phi:(\R^n)^\sigma\to\R^m$ such that $\phi(\ve_{\sigma_i})=|\sigma_i|\ve_i$.
    Observe that $\phi$ is bijective since each orthogonal basis vector $\ve_{\sigma_i}$ of $(\R^n)^\sigma$ is mapped to a nonzero scale of a distinct orthogonal basis vector $\ve_i$ of $\R^m$.
    Additionally, $\phi$ maps each vertex $\frac{1}{|\sigma_i|}\ve_{\sigma_i}-\frac{1}{|\sigma_j|}\ve_{\sigma_j}$ of $\SEP(G)^\sigma$ to a distinct vertex $\ve_i-\ve_j$ of $\SEP(G')$.
    Thus, $\SEP(G)^\sigma$ is linearly equivalent to $\SEP(G')$, and this linear equivalence implies combinatorial equivalence as well.
\end{proof}

We can further refine the equivalence between the fixed polytope of the symmetric edge polytope of a graph fixed by a permutation and the symmetric edge polytope of the contraction of that graph induced by the same permutation by demonstrating a relationship between their relative volumes.

\begin{theorem}\label{thm:sep-sub-vol}
    Let $G=([n],E)$ be a graph on $n$ vertices, let $\sigma\in S_n$ have cycle decomposition $\sigma=\sigma_1\cdots\sigma_m$, and let $G'=(\sigma,E')$ be the contraction of $G$ induced by $\sigma$ as a partition of $[n]$.
    Then,
    \begin{equation*}
        \rvol(\SEP(G)^\sigma) = \frac{\gcd(|\sigma_1|,\dots,|\sigma_m|)}{\prod_{i=1}^m|\sigma_i|}\rvol(\SEP(G')).
    \end{equation*}
\end{theorem}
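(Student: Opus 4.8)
The plan is to compute both relative volumes by relating each to the (absolute) volume of the corresponding polytope inside a well-chosen lattice, and then take the ratio. By Corollary~\ref{cor:contraction-equiv}, the linear map $\phi\colon(\R^n)^\sigma\to\R^m$ with $\phi(\ve_{\sigma_i})=|\sigma_i|\ve_i$ carries $\SEP(G)^\sigma$ onto $\SEP(G')$, so the two relative volumes differ only by the factor by which $\phi$ distorts the relevant lattices. Concretely, I would fix the lattice $\Lambda\coloneqq\Z^n\cap(\R^n)^\sigma$ inside the fixed subspace, compute the volume of $\SEP(G)^\sigma$ with respect to $\Lambda$, separately compute the volume of $\SEP(G')$ with respect to $\Z^m\cap\aff(\SEP(G'))$, and show the ratio is $\gcd(|\sigma_1|,\dots,|\sigma_m|)\big/\prod_i|\sigma_i|$. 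Relative volume is invariant under linear isomorphisms that respect lattices, so the whole computation reduces to comparing $\phi(\Lambda)$ with $\Z^m$ (or rather with the sublattice living in the correct affine hull).

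First I would describe $\Lambda=\Z^n\cap(\R^n)^\sigma$ explicitly: a vector is fixed by $\sigma$ iff it is constant on each cycle $\sigma_k$, so $\Lambda$ is exactly the set of integer combinations $\sum_k c_k\ve_{\sigma_k}$ with $c_k\in\Z$; thus $\{\ve_{\sigma_1},\dots,\ve_{\sigma_m}\}$ is a $\Z$-basis of $\Lambda$. Next, apply $\phi$: since $\phi(\ve_{\sigma_k})=|\sigma_k|\ve_k$, the image $\phi(\Lambda)$ is the sublattice $\bigoplus_k |\sigma_k|\Z\ve_k\subseteq\Z^m$. Now $\SEP(G^\prime)$ lies in the hyperplane $\{\bv1^\top\vx=0\}$ (this is immediate from its vertex description, each vertex being $\ve_i-\ve_j$), and we must compare, inside the rational hyperplane $H=\{\bv1^\top\vx=0\}\subseteq\R^m$, the lattice $\phi(\Lambda)\cap H$ against the ``standard'' lattice $\Z^m\cap H$, whose covolume in $H$ defines $\rvol(\SEP(G^\prime))$. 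The ratio of covolumes of these two lattices in $H$ is the index $[\Z^m\cap H : \phi(\Lambda)\cap H]^{-1}$ appropriately, and I expect a short computation to show $\Z^m\cap H$ has index $\prod_k|\sigma_k|\big/\gcd(|\sigma_1|,\dots,|\sigma_m|)$ over $\phi(\Lambda)\cap H$ — the $\prod|\sigma_k|$ coming from the dilation on each coordinate and the $\gcd$ correction coming from the one-dimensional drop caused by intersecting with $H$. Since relative volume scales inversely with the covolume of the reference lattice, this index is exactly the claimed scaling factor, and assembling $\rvol(\SEP(G)^\sigma)=\rvol(\phi(\SEP(G)^\sigma)$ with respect to $\phi(\Lambda))$ together with this index comparison yields the theorem.

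The main obstacle I anticipate is the lattice-index bookkeeping in the hyperplane $H$, i.e., correctly handling the interaction between (a) the coordinatewise dilation by the $|\sigma_k|$ and (b) the restriction to the codimension-one subspace $\bv1^\top\vx=0$. The dilation alone would give a factor $\prod_k|\sigma_k|$, but intersecting two scaled lattices with a hyperplane does not simply multiply covolumes — the primitive normal vector to $H$ relative to $\phi(\Lambda)$ versus relative to $\Z^m$ differs, and that discrepancy is precisely where the $\gcd(|\sigma_1|,\dots,|\sigma_m|)$ enters. I would make this rigorous by choosing an explicit $\Z$-basis of $\phi(\Lambda)\cap H$ (e.g.\ differences $|\sigma_k|\ve_k-|\sigma_1|\ve_1$ suitably rescaled, or vectors of the form $\ell\,\ve_k$ with denominators cleared by the gcd) and computing a Smith normal form or a determinant of the change-of-basis matrix against a standard basis of $\Z^m\cap H$ such as $\{\ve_2-\ve_1,\dots,\ve_m-\ve_1\}$. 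An alternative, possibly cleaner route is to avoid $H$ entirely: pass to the quotient $\R^m/\R\bv1$, in which $\SEP(G')$ and its lattice descend faithfully, dilate there, and read off the index directly; I would try both and keep whichever gives the least index-chasing. Either way, once the index equals $\prod_k|\sigma_k|/\gcd(|\sigma_k|)$, the conclusion is immediate.
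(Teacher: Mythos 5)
Your proposal is correct, and it reaches the theorem by a genuinely different route for the key computation. Both you and the paper start from the map $\phi:(\R^n)^\sigma\to\R^m$, $\phi(\ve_{\sigma_i})=|\sigma_i|\ve_i$, of Corollary \ref{cor:contraction-equiv}, so everything reduces to the factor by which $\phi$ rescales relative volume on the common affine span. You compute that factor as a lattice index: $\Z^n\cap(\R^n)^\sigma$ has $\Z$-basis $\{\ve_{\sigma_k}\}$, its image is $\bigoplus_k|\sigma_k|\Z\ve_k$, and the claimed index $[\Z^m\cap H:\phi(\Lambda)\cap H]=\prod_k|\sigma_k|/\gcd(|\sigma_1|,\dots,|\sigma_m|)$ is indeed correct; the step you defer is genuinely short, e.g.\ the coordinate-sum map $s(\vx)=\bv1^\top\vx$ sends $\Z^m$ onto $\Z$ and $\phi(\Lambda)$ onto $\gcd(|\sigma_1|,\dots,|\sigma_m|)\Z$, and the standard index factorization $[\Z^m:\phi(\Lambda)]=[\Z^m\cap H:\phi(\Lambda)\cap H]\cdot[s(\Z^m):s(\phi(\Lambda))]$ gives $\prod/\gcd$ at once (your Smith-normal-form or quotient-by-$\R\bv1$ plans work equally well). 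The paper instead never names the index: it takes the explicit half-open parallelepiped $\P$ spanned by $|\sigma_m|\ve_{\sigma_i}-|\sigma_i|\ve_{\sigma_m}$, uses that the relative volume of a half-open integral parallelepiped equals its number of lattice points, and counts points in $\P$ and $\phi(\P)$ directly, extracting the $\gcd$ via a bijection between residue classes modulo $|\sigma_m|$. Your version is more conceptual and makes transparent where the $\gcd$ comes from (the drop in the image of the coordinate-sum map upon restricting to $H$), while the paper's is more elementary and self-contained, needing only counting. Two small points to make explicit when writing it up: the relative volume of $\SEP(G)^\sigma$ must be taken with respect to $\Z^n\cap\aff(\SEP(G)^\sigma)$, i.e.\ $\Lambda\cap H_n$ rather than $\Lambda$ itself, and you should note that $\phi$ preserves coordinate sums so that $\phi(\Lambda\cap H_n)=\phi(\Lambda)\cap H$; and, like the paper, you implicitly use that $G$ (hence $G'$) is connected so that both affine spans are the full zero-sum hyperplanes.
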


\begin{proof}
    Observe that $\ve_{\sigma_1},\dots,\ve_{\sigma_m}$ form an orthogonal basis of $(\R^n)^\sigma$ and let $\phi:(\R^n)^\sigma\to\R^m$ be the bijective linear map defined by $\phi(\ve_{\sigma_i})=|\sigma_i|\ve_i$.
    Recall that $\phi$ maps $\SEP(G)^\sigma$ to $\SEP(G')$ as demonstrated in Corollary \ref{cor:contraction-equiv}.
    Consider the half-open integral parallelepiped, $\P$, formed by the vectors
    \begin{equation*}
        |\sigma_i||\sigma_m|\left(\frac{1}{|\sigma_i|}\ve_{\sigma_i}-\frac{1}{|\sigma_m|}\ve_{\sigma_m}\right) = |\sigma_m|\ve_{\sigma_i}-|\sigma_i|\ve_{\sigma_m},
    \end{equation*}
    for $i\in[m-1]$.
    That is, $\P=\set{\sum_{i=1}^m\lambda_i(|\sigma_m|\ve_i-|\sigma_i|\ve_m)}{0\le\lambda_1,\dots,\lambda_{m-1}<1,\ \sum_{i=1}^m\lambda_i=1}$.
    Note that $\phi$ maps $\P$ to the integral parallelepiped $\phi(\P)$ formed by the vectors $|\sigma_i||\sigma_m|(\ve_i-\ve_m)$ for $i\in[m-1]$.
    We claim that
    \begin{equation*}
        \frac{\rvol(\P)}{\rvol(\phi(\P))} = \frac{\gcd(|\sigma_1|,\dots,|\sigma_m|)}{\prod_{i\in[m]}|\sigma_i|},
    \end{equation*}
    and thus, since $\Span\P=\Span \SEP(G)^\sigma$ by construction,
    \begin{equation*}
        \frac{\rvol(\SEP(G)^\sigma)}{\rvol(\SEP(G'))} = \frac{\rvol(\SEP(G)^\sigma)}{\rvol(\phi(\SEP(G)^\sigma))} = \frac{\rvol(\P)}{\rvol(\phi(\P))} = \frac{\gcd(|\sigma_1|,\dots,|\sigma_m|)}{\prod_{i\in[m]}|\sigma_i|}.
    \end{equation*}

    Since $\phi(\P)$ is a half-open integral parallelepiped, its relative volume is given by the number of lattice points it contains.
    We can write $\phi(\P)$ more explicitly in terms of the orthogonal basis vectors $\ve_i$ as
    \begin{equation*}
        \phi(\P) = \set{\sum_{i\in[m-1]}|\sigma_i||\sigma_m|\lambda_i\ve_i-\left(\sum_{i\in[m-1]}|\sigma_i||\sigma_m|\lambda_i\right)\ve_m}{0\le\lambda_i<1}.
    \end{equation*}
    Restricting each coordinate to be integral, we see the number of lattice points contained in $\phi(\P)$ is the number of vectors $\lambda=(\lambda_1,\dots,\lambda_{m-1})\in[0,1)^{m-1}$ satisfying $|\sigma_i||\sigma_m|\lambda_i\in\Z$ and $\sum_{i\in[m-1]}|\sigma_i||\sigma_m|\lambda_i\in\Z$.
    Substituting $n_i=|\sigma_i||\sigma_m|\lambda_i$, this is equivalent to the number of integral vectors $\vn\in\Z^{m-1}$ satisfying $0\le n_i<|\sigma_i||\sigma_m|$ and $\sum_{i\in[m-1]}n_i\in\Z$.
    That $\sum_{i\in[m-1]}n_i\in\Z$ follows immediately from the condition that $n_i\in\Z$ for each $i\in[m-1]$, so we need only count the number of integer vectors $\vn$ for which $0\le n_i<|\sigma_i||\sigma_m|$, of which there are
    \begin{equation*}
        \prod_{i\in[m-1]}|\sigma_i||\sigma_m| = |\sigma_m|^{m-2}\prod_{i\in[m]}|\sigma_i|.
    \end{equation*}
    Thus, $\rvol(\phi(\P))=|\sigma_m|^{m-2}\prod_{i\in[m]}|\sigma_i|$.

    We can find the relative volume of $\P$ similarly, by counting the number of lattice points it contains, since $\P$ is also an integral parallelepiped.
    Writing $\P$ in terms of the orthogonal basis vectors $\ve_{\sigma_i}$, we see
    \begin{equation*}
        \P = \set{\sum_{i\in[m-1]}|\sigma_m|\lambda_i\ve_{\sigma_i}-\left(\sum_{i\in[m-1]}|\sigma_i|\lambda_i\right)\ve_{\sigma_m}}{0\le\lambda_i<1}.
    \end{equation*}
    Restricting each coordinate to be integral, we see the number of lattice points contained in $\P$ is the number of vectors $\lambda=(\lambda_1,\dots,\lambda_{m-1})\in[0,1)^{m-1}$ satisfying $|\sigma_m|\lambda_i\in\Z$ and $\sum_{i\in[m-1]}|\sigma_i|\lambda_i\in\Z$.
    Substituting $n_i=|\sigma_m|\lambda_i$, this is equivalent to the number of integral vectors $\vn\in[|\sigma_m|]^{m-1}\subset\Z^{m-1}$ for which $\sum_{i\in[m-1]}|\sigma_i|n_i\in|\sigma_m|\Z$.

    Let $g=\gcd(|\sigma_1|,\dots,|\sigma_m|)$.
    Then, $\sum_{i\in[m-1]}|\sigma_i|n_i\equiv0\pmod{g}$ for all $\vn\in\Z^{m-1}$ and there exists some $\vc\in[|\sigma_m|]^{m-1}$ for which $\sum_{i\in[m-1]}|\sigma_i|c_i\equiv g\pmod{|\sigma_m|}$.
    Now, for any $k\in\left[\frac{|\sigma_m|}{g}\right]$, define
    \begin{equation*}
        \psi_{k}:[|\sigma_m|]^{m-1}\to[|\sigma_m|]^{m-1} \;\text{ as }\; \psi_{k}(\vn)\equiv\vn-k\vc\pmod{|\sigma_m|},
    \end{equation*}
    where the reduction mod $|\sigma_m|$ is performed element-wise.
    We claim $\psi_k$ provides a bijection from vectors $\vn\in[|\sigma_m|]^{m-1}$ for which $\sum_{i\in[m-1]}|\sigma_i|n_i\equiv kg\pmod{|\sigma_m|}$ to vectors $\vn'\in[|\sigma_m|]^{m-1}$ for which $\sum_{i\in[m-1]}|\sigma_i|n_i'\equiv0\pmod{|\sigma_m|}$.
    If $\sum_{i\in[m-1]}|\sigma_i|n_i=kg\pmod{|\sigma_m|}$, then \begin{equation*}
        \sum_{i\in[m-1]}|\sigma_i|\psi_k(\vn)_i
        \equiv \sum_{i\in[m-1]}|\sigma_i|(n_i+kc_i)
        = \sum_{i\in[m-1]}|\sigma_i|n_i+k\sum_{i\in[m-1]}|\sigma_i|c_i
        \equiv kg-kg\equiv0
        \pmod{|\sigma_m|},
    \end{equation*}
    as desired.
    Since such a bijection can be found for each possible sum $0g,1g,\dots,\left(\frac{|\sigma_m|}{g}-1\right)g$, there must be the same number of vectors $\vn\in[|\sigma_m|]^{m-1}$ producing sums congruent mod $|\sigma_m|$ to each of these $g$ distinct values.
    Thus, there are exactly $\frac{|\sigma_m|^{m-1}}{\frac{|\sigma_m|}{g}}=|\sigma_m|^{m-2}g$ vectors $\vn\in[|\sigma_m|]^{m-1}$ for which $\sum_{i\in[m-1]}|\sigma_i|n_i\equiv0\pmod{|\sigma_m|}$, so $\rvol(\P)=|\sigma_m|^{m-2}\gcd(|\sigma_1|,\dots,|\sigma_m|)$.

    Together with the value of $\rvol(\phi(\P))$ obtained above, we have
    \begin{equation*}
        \frac{\rvol(\P)}{\rvol(\phi(\P))} = \frac{|\sigma_m|^{m-2}\gcd(|\sigma_1|,\dots,|\sigma_m|)}{|\sigma_m|^{m-2}\prod_{i\in[m]}|\sigma_i|}
        = \frac{\gcd(|\sigma_1|,\dots,|\sigma_m|)}{\prod_{i\in[m]}|\sigma_i|},
    \end{equation*}
    as desired.
    This implies
    \begin{equation*}
        \frac{\rvol(\SEP(G)^\sigma)}{\rvol(\SEP(G'))} = \frac{\rvol(\SEP(G)^\sigma)}{\rvol(\phi(\SEP(G)^\sigma))}
        = \frac{\rvol(\P)}{\rvol(\phi(\P))}
        = \frac{\gcd(|\sigma_1|,\dots,|\sigma_m|)}{\prod_{i\in[m]}|\sigma_i|},
    \end{equation*}
    and so
    \begin{equation*}
        \rvol(\SEP(G)^\sigma) = \frac{\gcd(|\sigma_1|,\dots,|\sigma_m|)}{\prod_{i=1}^m|\sigma_i|}\rvol(\SEP(G')),
    \end{equation*}
    concluding the proof.
\end{proof}

\section{Fixed Polytopes of \texorpdfstring{$\SEP(K_n)$}{SEP(Kn)}} \label{sec:fixed_root_polytopes}

In this section, we specialize the results of Section \ref{sec:fixed_polytopes} to the special case of complete graphs, $K_n$, which yield a family of polytopes of arising from structures in diverse branches of mathematics.
In particular, we obtain explicit volume formulas and inequality descriptions for the fixed polytopes of $\SEP(K_n)$.

The symmetric edge polytopes of complete graphs, $\SEP(K_n)$ have been studied from a variety of perspectives, under a multitude of names.
For example, $\SEP(K_n)$ is exactly the root polytope of type $A_{n}$ introduced by \cite{GelfandGraevPostnikov} and studied further by \cite{Postnikov}.
Further, Hetyei refers to the same polytope as the Legendre polytope, $\mathcal{L}_{n-1}$, with connections to Legendre polynomials and Delannoy numbers in \cite{Hetyei}.
There, Hetyei notes that these polytopes are the intersection of the cross polytope in dimension $n$ with the hyperplane normal to the all ones vector.
Meanwhile, $\SEP(K_n)$ is the convex hull of the set of points whose coordinates are permutations of the $n$-element tuple $(1,0,\dots,0,-1)$, and are thus generalized permutahedra, $\Pi_n(1,0,\dots,0,-1)$ \cite{Postnikov,pos-root}.

First, we provide a vertex description of $\SEP(K_{n})$ which follows as a corollary of Theorem \ref{thm:contraction-vert-desc}.

\begin{corollary}\label{thm:contraction-vert-desc-complete-graph}
    Let $\sigma\in S_n$ have cycle decomposition $\sigma=\sigma_1\cdots\sigma_m$.
    Then, $\SEP(K_{n})^\sigma$ has vertex set \[\set{\frac{1}{|\sigma_i|}\ve_{\sigma_i}-\frac{1}{|\sigma_j|}\ve_{\sigma_j}}{i,j\in[m],i\ne j}.\]
\end{corollary}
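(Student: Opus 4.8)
The plan is to derive this as an immediate specialization of Theorem~\ref{thm:contraction-vert-desc} to the case $G = K_n$. That theorem describes the vertex set of $\SEP(G)^\sigma$ in terms of which pairs of blocks $\sigma_i, \sigma_j$ of the partition induced by $\sigma$ are joined by an edge of $G$; concretely, it gives
\begin{equation*}
    \set{\frac{1}{|\sigma_i|}\ve_{\sigma_i}-\frac{1}{|\sigma_j|}\ve_{\sigma_j}}{(a,b)\in E\text{ for some }a\in\sigma_i\text{ and }b\in\sigma_j}.
\end{equation*}
So the entire task reduces to checking, when $G = K_n$, exactly which pairs $(i,j)$ with $i \ne j$ in $[m]$ contribute a vertex.

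The key observation is that, because $K_n$ is complete, every such pair contributes. Viewing $\sigma$ as a partition of $[n]$, each block $\sigma_i$ is nonempty --- every cycle of $\sigma$, including fixed points regarded as $1$-cycles, contains at least one element of $[n]$. Hence, for distinct $i, j \in [m]$, we may pick any $a \in \sigma_i$ and $b \in \sigma_j$; since the blocks are disjoint we have $a \ne b$, and since $K_n$ contains every edge on $[n]$, $(a,b)$ is an edge of $K_n$. Thus the indexing condition ``$(a,b)$ is an edge of $K_n$ for some $a\in\sigma_i$ and $b\in\sigma_j$'' holds for \emph{all} pairs $i \ne j$, and the vertex set collapses to $\set{\frac{1}{|\sigma_i|}\ve_{\sigma_i}-\frac{1}{|\sigma_j|}\ve_{\sigma_j}}{i,j\in[m],\,i\ne j}$, as claimed. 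Equivalently, the contraction of $K_n$ induced by $\sigma$ is the complete graph $K_m$ on the $m$ blocks, so by Corollary~\ref{cor:contraction-equiv} one also recovers that $\SEP(K_n)^\sigma$ is linearly equivalent to $\SEP(K_m)$, consistent with reading off the vertices directly.

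I do not expect a real obstacle here: the only nontrivial content --- that the listed vectors are genuinely the vertices of their convex hull rather than merely a generating set, and that distinct pairs $(i,j)$ yield distinct vertices --- is already established inside the proof of Theorem~\ref{thm:contraction-vert-desc}, where each $\frac{1}{|\sigma_i|}\ve_{\sigma_i}-\frac{1}{|\sigma_j|}\ve_{\sigma_j}$ is exhibited as the unique maximizer over the generating set of the linear functional $\vx \mapsto \langle \ve_{\sigma_i}-\ve_{\sigma_j}, \vx\rangle$. Consequently the proof amounts to the one-line reduction above together with a citation of Theorem~\ref{thm:contraction-vert-desc}; the main thing to state carefully is simply why completeness of $K_n$ makes the block-adjacency condition vacuous.
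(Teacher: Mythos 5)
Your proposal is correct and matches the paper's own proof: both simply specialize Theorem~\ref{thm:contraction-vert-desc} to $G=K_n$ and observe that completeness of $K_n$ makes the block-adjacency condition hold for every pair $i\ne j$. The additional remarks about vertex distinctness and Corollary~\ref{cor:contraction-equiv} are fine but not needed beyond what the theorem already provides.
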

\begin{proof}
    Consider $G=K_{n}$.
    By Theorem \ref{thm:contraction-vert-desc}, the vertices of $\SEP(K_{n})^{\sigma}=\psi_{\sigma}(\SEP(K_{n}))$ are exactly $\set{\frac{1}{|\sigma_i|}\ve_{\sigma_i}-\frac{1}{|\sigma_j|}\ve_{\sigma_j}}{(a,b)\in E(K_{n})\text{ for some }a\in\sigma_i\text{ and }b\in\sigma_j}$, but $K_n$ contains edges between all pairs of distinct vertices, so the membership condition of this set is true for all $i,j\in[m]$ with $i\ne j$.
\end{proof}

We now identify combinatorial and linear equivalence of the fixed polytope $SEP(K_{n})^{\sigma}$ to a symmetric edge polytope determined by the length of a permutation.
More specifically, for $\sigma\in S_{n}$ with cycle decomposition $\sigma=\sigma_{1}\cdots\sigma_{m}$, we identify combinatorial and linear equivalence between $SEP(K_{n})^{\sigma}$ and $SEP(K_{m})$.
We then provide insight into the dimension of the fixed polytope as well as its $H$ description.

\begin{corollary}\label{corollary: SEP-comb-equivalence-lower-dim}
Let $\sigma \in S_{n}$ have cycle decomposition $\sigma=\sigma_1\cdots\sigma_m$.
Then, $\SEP(K_n)^\sigma$ is combinatorially and linearly equivalent to $\SEP(K_m)$.
\end{corollary}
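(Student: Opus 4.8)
The plan is to obtain this as an immediate specialization of Corollary \ref{cor:contraction-equiv}. First I would observe that, viewing $\sigma$ as the partition $\{\sigma_1,\dots,\sigma_m\}$ of $[n]$, the contraction $K_n'=(\sigma,E')$ of $K_n$ induced by $\sigma$ is precisely $K_m$. Indeed, for any two distinct blocks $\sigma_i$ and $\sigma_j$, picking any $a\in\sigma_i$ and $b\in\sigma_j$ gives an edge $(a,b)\in E(K_n)$ since $K_n$ is complete, so $(\sigma_i,\sigma_j)\in E'$; and no block is joined to itself because the blocks of a partition are disjoint. Hence $K_n'$ is the simple graph on the $m$ vertices $\sigma_1,\dots,\sigma_m$ with all $\binom{m}{2}$ edges present, i.e.\ $K_n'=K_m$. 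Applying Corollary \ref{cor:contraction-equiv} with $G=K_n$ and $G'=K_m$ then yields that $\SEP(K_n)^\sigma$ is linearly and combinatorially equivalent to $\SEP(K_m)$.

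For a more self-contained argument I would instead work directly from the vertex description. By Corollary \ref{thm:contraction-vert-desc-complete-graph}, $\SEP(K_n)^\sigma=\psi_\sigma(\SEP(K_n))$ has vertex set $\set{\tfrac{1}{|\sigma_i|}\ve_{\sigma_i}-\tfrac{1}{|\sigma_j|}\ve_{\sigma_j}}{i,j\in[m],\ i\ne j}$, which lies in the fixed subspace $(\R^n)^\sigma$ with orthogonal basis $\ve_{\sigma_1},\dots,\ve_{\sigma_m}$. Define the linear map $\phi:(\R^n)^\sigma\to\R^m$ by $\phi(\ve_{\sigma_i})=|\sigma_i|\ve_i$; it is a bijection, since it sends a basis to a basis, and it carries each vertex $\tfrac{1}{|\sigma_i|}\ve_{\sigma_i}-\tfrac{1}{|\sigma_j|}\ve_{\sigma_j}$ to $\ve_i-\ve_j$. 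Since $\set{\ve_i-\ve_j}{i,j\in[m],\ i\ne j}$ is exactly the vertex set of $\SEP(K_m)$, Lemma \ref{lemma:mapped-vertices} shows $\phi$ restricts to a linear isomorphism from $\SEP(K_n)^\sigma$ onto $\SEP(K_m)$, and linear equivalence of polytopes entails combinatorial equivalence.

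There is essentially no obstacle in this corollary: all the substance is already contained in Theorem \ref{thm:contraction-vert-desc} and Corollary \ref{cor:contraction-equiv}. The only point that needs a line of care is the identification of the contraction of $K_n$ by $\sigma$ with $K_m$, which uses that $K_n$ is simple (so the contraction has no loops) and complete (so every pair of blocks is joined by an edge).
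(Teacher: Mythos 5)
Your first argument is exactly the paper's proof: identify the contraction of $K_n$ induced by $\sigma$ with $K_m$ and invoke Corollary \ref{cor:contraction-equiv} (your extra lines checking completeness of the contraction only make explicit what the paper leaves implicit). The alternative direct argument merely reproves Corollary \ref{cor:contraction-equiv} in the complete-graph case, so the whole proposal is correct and follows essentially the same route as the paper.
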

\begin{proof}
    Note that $K_{m}$ corresponds to the contraction of $K_{n}$ induced by $\sigma$.
    Hence, by Corollary \ref{cor:contraction-equiv}, $\SEP(K_{n})^{\sigma}$ is combinatorially and linearly equivalent to $\SEP(K_{m})$.
\end{proof}

\begin{corollary}\label{corollary: SEP-dimension-lower-dim}
Let $\sigma \in S_{n}$ have cycle decomposition $\sigma=\sigma_1\cdots\sigma_m$.
Then $\SEP(K_{n})^{\sigma}$ has dimension $m-1$.
\end{corollary}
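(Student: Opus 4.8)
The plan is to read off the dimension from the linear equivalence already in hand. By Corollary~\ref{corollary: SEP-comb-equivalence-lower-dim}, the fixed polytope $\SEP(K_n)^\sigma$ is linearly equivalent to $\SEP(K_m)$, and a linear isomorphism between two polytopes preserves their affine dimension. Proposition~\ref{prop:11} states that $\SEP(K_m)$ is an $(m-1)$-dimensional polytope, so $\SEP(K_n)^\sigma$ has dimension $m-1$ as well. Since this is a one-line consequence of results already established, there is no real obstacle to overcome here.

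For completeness I would also record the self-contained computation straight from the vertex description, which doubles as a sanity check. By Corollary~\ref{thm:contraction-vert-desc-complete-graph}, every vertex of $\SEP(K_n)^\sigma$ has the form $\frac{1}{|\sigma_i|}\ve_{\sigma_i}-\frac{1}{|\sigma_j|}\ve_{\sigma_j}$, which lies in the $m$-dimensional subspace $\Span\{\ve_{\sigma_1},\dots,\ve_{\sigma_m}\}$ and, because $\ve_{\sigma_k}$ has exactly $|\sigma_k|$ coordinates equal to $1$, also satisfies $\bv1^\top\vx=0$; hence $\SEP(K_n)^\sigma$ lies in a flat of dimension at most $m-1$. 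For the matching lower bound, note that $\SEP(K_n)^\sigma=\psi_\sigma(\SEP(K_n))$ by Lemma~\ref{lemma:psi-mapping}, and since $\psi_\sigma$ is linear it sends $\bv0\in\SEP(K_n)$ to $\bv0$; thus the affine hull of $\SEP(K_n)^\sigma$ is a linear subspace, and this subspace contains the $m-1$ vectors $\frac{1}{|\sigma_1|}\ve_{\sigma_1}-\frac{1}{|\sigma_i|}\ve_{\sigma_i}$ for $i=2,\dots,m$, which are linearly independent because the $\ve_{\sigma_k}$ have pairwise disjoint supports. Combining the two bounds gives $\dim\SEP(K_n)^\sigma=m-1$.

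The only step requiring any attention in the second approach is the linear independence of those $m-1$ difference vectors, and that is immediate from the disjointness of the supports of the $\ve_{\sigma_k}$; everything else is bookkeeping. In the write-up I would lead with the short argument via Corollary~\ref{corollary: SEP-comb-equivalence-lower-dim} and Proposition~\ref{prop:11}, and optionally append the direct span computation as a remark.
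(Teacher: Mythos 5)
Your lead argument is correct and is essentially the paper's proof: invoke the equivalence of Corollary~\ref{corollary: SEP-comb-equivalence-lower-dim} (the paper phrases it via the face-lattice bijection, you via the dimension-preserving linear isomorphism) together with $\dim\SEP(K_m)=m-1$ from Proposition~\ref{prop:11}. The supplementary direct span computation is also sound but is just an optional sanity check on the same conclusion.
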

\begin{proof}
    By Corollary \ref{corollary: SEP-comb-equivalence-lower-dim}, we have that $\SEP(K_{n})^{\sigma}$ is combinatorially and linearly equivalent to $\SEP(K_{m})$, which has dimension $m-1$, so $\dim\SEP(K_n)^\sigma=\dim\SEP(K_m)=m-1$.
\end{proof}
The results above demonstrate how a fixed polytope $SEP(K_n)^\sigma$ captures a lower-dimensional polytope $SEP(K_m)$, where $m$ is determined by the number of cycles in $\sigma$.
The following example concretely shows how a nontrivial permutation reduces ambient dimension and yields a fixed polytope linearly and combinatorially equivalent to $SEP(K_m)$.

\begin{example}
Consider the symmetric edge polytope associated with $K_{4}$ in Figure \ref{fig:k_3-vs-k_4}.
Notice that $\SEP(K_{4})^{(12)}$ is linearly and combinatorially equivalent to $\SEP(K_{3})$ through a bijection of their face lattices.
In addition, we have $\dim\SEP(K_{4})^{(12)}=\dim\SEP(K_{3})=2$.
\end{example}

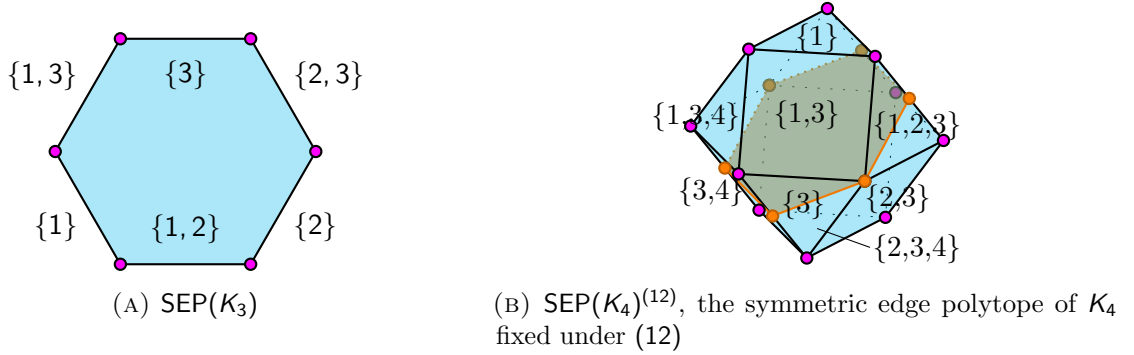
\begin{figure*}[ht]
    \begin{subfigure}{0.5\textwidth}
        \centering
        \begin{tikzpicture}[
            x={(-0.866cm,-0.5cm)},
            y={(0.866cm,-0.5cm)},
            z={(0cm,1cm)},
            scale = 1.0
            ]
            \coordinate (ab) at (1,-1,0);
            \coordinate (ac) at (1,0,-1);
            \coordinate (ba) at (-1,1,0);
            \coordinate (bc) at (0,1,-1);
            \coordinate (ca) at (-1,0,1);
            \coordinate (cb) at (0,-1,1);
            \fill[facet] (ab) -- (ac) -- (bc) -- (ba) -- (ca) -- (cb) -- cycle {};
            \draw[edge] (ab) -- (ac) node[pos=0.67, label={[black]left:$\{1\}$}] {};
            \draw[edge] (ac) -- (bc) node[pos=0.5, label={[black]above:$\{1,2\}$}] {};
            \draw[edge] (bc) -- (ba) node[pos=0.33, label={[black]right:$\{2\}$}] {};
            \draw[edge] (ba) -- (ca) node[pos=0.67, label={[black]right:$\{2,3\}$}] {};
            \draw[edge] (ca) -- (cb) node[pos=0.5, label={[black]below:$\{3\}$}] {};
            \draw[edge] (cb) -- (ab) node[pos=0.33, label={[black]left:$\{1,3\}$}] {};
            \node[vertex] at (ab) {};
            \node[vertex] at (ac) {};
            \node[vertex] at (ba) {};
            \node[vertex] at (bc) {};
            \node[vertex] at (ca) {};
            \node[vertex] at (cb) {};
        \end{tikzpicture}
        \caption{$\SEP(K_3)$\\\vphantom{.}}
    \end{subfigure}%
    \begin{subfigure}{0.5\textwidth}
        \centering
        \begin{tikzpicture}[
            x={(0.374024cm, -0.375379cm)},
            y={(0.927419cm, 0.151412cm)},
            z={(-0.000023cm, 0.914421cm)},
            scale=1.2,
            ]
            \draw[edge,back] (0.00000, 0.81650, 2.30940) -- (0.00000, 1.63299, 1.15470);
            \draw[edge,back] (0.00000, 0.81650, 2.30940) -- (-0.70711, 0.40825, 1.15470);
            \draw[edge,back] (1.41421, 1.63299, 1.15470) -- (0.00000, 1.63299, 1.15470);
            \draw[edge,back] (0.00000, 1.63299, 1.15470) -- (-0.70711, 0.40825, 1.15470);
            \draw[edge,back] (0.00000, 1.63299, 1.15470) -- (0.70711, 1.22474, 0.00000);
            \draw[edge,back] (-0.70711, 0.40825, 1.15470) -- (0.00000, -0.81650, 1.15470);
            \draw[edge,back] (-0.70711, 0.40825, 1.15470) -- (0.00000, 0.00000, 0.00000);
            \draw[edge,back] (0.70711, 1.22474, 0.00000) -- (0.00000, 0.00000, 0.00000);
                \draw[subedge,subback] (0.00000, 1.22474, 1.73205) -- (1.41421, 1.22474, 1.73205);
                \draw[subedge,subback] (0.00000, 1.22474, 1.73205) -- (-0.70711, 0.40825, 1.15470);
                \draw[subedge,subback] (-0.70711, 0.40825, 1.15470) -- (0.00000, -0.40825, 0.57735);
            \node[vertex] at (0.00000, 1.63299, 1.15470)     {};
            \node[vertex] at (-0.70711, 0.40825, 1.15470)     {};
                \fill[subfacet] (0.00000, -0.40825, 0.57735) -- (-0.70711, 0.40825, 1.15470) -- (0.00000, 1.22474, 1.73205) -- (1.41421, 1.22474, 1.73205) -- (2.12132, 0.40825, 1.15470) -- (1.41421, -0.40825, 0.57735) -- cycle {};
                \node[subvertex] at (0.00000, 1.22474, 1.73205)     {};
                \node[subvertex] at (-0.70711, 0.40825, 1.15470)     {};
            \fill[facet] (0.70711, -0.40825, 2.30940) -- (1.41421, 0.81650, 2.30940) -- (0.00000, 0.81650, 2.30940) -- cycle {};
            \fill[facet] (2.12132, 0.40825, 1.15470) -- (1.41421, 0.81650, 2.30940) -- (1.41421, 1.63299, 1.15470) -- cycle {};
            \fill[facet] (1.41421, -0.81650, 1.15470) -- (0.70711, -0.40825, 2.30940) -- (1.41421, 0.81650, 2.30940) -- (2.12132, 0.40825, 1.15470) -- cycle {};
            \fill[facet] (0.00000, -0.81650, 1.15470) -- (0.70711, -0.40825, 2.30940) -- (1.41421, -0.81650, 1.15470) -- cycle {};
            \fill[facet] (1.41421, 0.00000, 0.00000) -- (2.12132, 0.40825, 1.15470) -- (1.41421, 1.63299, 1.15470) -- (0.70711, 1.22474, 0.00000) -- cycle {};
            \fill[facet] (1.41421, 0.00000, 0.00000) -- (2.12132, 0.40825, 1.15470) -- (1.41421, -0.81650, 1.15470) -- cycle {};
            \fill[facet] (0.00000, 0.00000, 0.00000) -- (0.00000, -0.81650, 1.15470) -- (1.41421, -0.81650, 1.15470) -- (1.41421, 0.00000, 0.00000) -- cycle {};
            \draw[edge] (1.41421, 0.81650, 2.30940) -- (0.00000, 0.81650, 2.30940);
            \draw[edge] (1.41421, 0.81650, 2.30940) -- (0.70711, -0.40825, 2.30940);
            \draw[edge] (1.41421, 0.81650, 2.30940) -- (1.41421, 1.63299, 1.15470);
            \draw[edge] (1.41421, 0.81650, 2.30940) -- (2.12132, 0.40825, 1.15470);
            \draw[edge] (0.00000, 0.81650, 2.30940) -- (0.70711, -0.40825, 2.30940);
            \draw[edge] (0.70711, -0.40825, 2.30940) -- (1.41421, -0.81650, 1.15470);
            \draw[edge] (0.70711, -0.40825, 2.30940) -- (0.00000, -0.81650, 1.15470);
            \draw[edge] (1.41421, 1.63299, 1.15470) -- (2.12132, 0.40825, 1.15470);
            \draw[edge] (1.41421, 1.63299, 1.15470) -- (0.70711, 1.22474, 0.00000);
            \draw[edge] (2.12132, 0.40825, 1.15470) -- (1.41421, -0.81650, 1.15470);
            \draw[edge] (2.12132, 0.40825, 1.15470) -- (1.41421, 0.00000, 0.00000);
            \draw[edge] (1.41421, -0.81650, 1.15470) -- (0.00000, -0.81650, 1.15470);
            \draw[edge] (1.41421, -0.81650, 1.15470) -- (1.41421, 0.00000, 0.00000);
            \draw[edge] (0.00000, -0.81650, 1.15470) -- (0.00000, 0.00000, 0.00000);
            \draw[edge] (0.70711, 1.22474, 0.00000) -- (1.41421, 0.00000, 0.00000);
            \draw[edge] (1.41421, 0.00000, 0.00000) -- (0.00000, 0.00000, 0.00000);
                \draw[subedge] (1.41421, 1.22474, 1.73205) -- (2.12132, 0.40825, 1.15470);
                \draw[subedge] (2.12132, 0.40825, 1.15470) -- (1.41421, -0.40825, 0.57735);
                \draw[subedge] (1.41421, -0.40825, 0.57735) -- (0.00000, -0.40825, 0.57735);
            \node[vertex] at (1.41421, 0.81650, 2.30940)     {};
            \node[vertex] at (0.00000, 0.81650, 2.30940)     {};
            \node[vertex] at (0.70711, -0.40825, 2.30940)     {};
            \node[vertex] at (1.41421, 1.63299, 1.15470)     {};
            \node[vertex] at (2.12132, 0.40825, 1.15470)     {};
            \node[vertex] at (1.41421, -0.81650, 1.15470)     {};
            \node[vertex] at (0.00000, -0.81650, 1.15470)     {};
            \node[vertex] at (0.70711, 1.22474, 0.00000)     {};
            \node[vertex] at (1.41421, 0.00000, 0.00000)     {};
            \node[vertex] at (0.00000, 0.00000, 0.00000)     {};
                \node[subvertex] at (1.41421, 1.22474, 1.73205)     {};
                \node[subvertex] at (2.12132, 0.40825, 1.15470)     {};
                \node[subvertex] at (1.41421, -0.40825, 0.57735)     {};
                \node[subvertex] at (0.00000, -0.40825, 0.57735)     {};
            \node at (0.70711, 0.40825, 2.30940) {\{1\}};
            \node[xshift=0.3cm] at (1.64991, 0.95258, 1.53960) {\{1,2,3\}};
            \node at (1.41421, 0.00000, 1.73205) {\{1,3\}};
            \node[xshift=-0.4cm] at (0.70711, -0.68042, 1.53960) {\{1,3,4\}};
            \node[xshift=0.3cm] at (1.41421, 0.81650, 0.57735) {\{2,3\}};
            \node[xshift=0.0cm] at (1.64991, -0.13608, 0.76980) {\{3\}};
            \node[xshift=-0.5cm] at (0.70711, -0.40825, 0.57735) {\{3,4\}};
            \draw[black, thin] (0.70711, 0.40825, 0.00000) -- (1.40711, 0.75825, 0.00000) node[label={[right,yshift=-0.15cm,xshift=-0.1cm]:\{2,3,4\}}] {};
        \end{tikzpicture}
        \caption{$\SEP(K_4)^{(12)}$, the symmetric edge polytope of $K_4$ fixed under $(12)$}
    \end{subfigure}
    \caption{Correspondence between $SEP(K_3)$ and $SEP(K_4)^{(1,2)}$}
    \label{fig:k_3-vs-k_4}
\end{figure*}

We now compute the relative volume of the fixed subpolytopes of symmetric edge polytopes of complete graphs under arbitrary coordinate permutations, leveraging the volume relationship obtained in Theorem \ref{thm:sep-sub-vol} and an explicit volume formula for $\SEP(K_n)$ as found in \cite{siam-root}.
\begin{corollary}\label{cor:sep-km-vol}
    Let $\sigma\in S_n$ have cycle decomposition $\sigma=\sigma_1\cdots\sigma_m$.
    Then, \[\rvol(\SEP(K_n)^\sigma)=\frac{\gcd(|\sigma_1|,\dots,|\sigma_m|)}{(m-1)!\prod_{i=1}^m|\sigma_i|}\binom{2(m-1)}{m-1}.\]
\end{corollary}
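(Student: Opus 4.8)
The plan is to obtain this as a direct specialization of Theorem~\ref{thm:sep-sub-vol} to $G = K_n$, combined with the known volume of the type $A_{m-1}$ root polytope. First I would identify the contraction: viewing $\sigma = \sigma_1\cdots\sigma_m$ as a partition of $[n]$, the contraction $G' = (\sigma, E')$ of $K_n$ has an edge between every pair of distinct blocks $\sigma_i, \sigma_j$, since $K_n$ contains edges between all pairs of vertices and in particular between some $a \in \sigma_i$ and $b \in \sigma_j$. Hence $G' = K_m$; this is exactly the observation already recorded in Corollary~\ref{corollary: SEP-comb-equivalence-lower-dim}.

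With $G' = K_m$ in hand, Theorem~\ref{thm:sep-sub-vol} immediately gives
\begin{equation*}
\rvol(\SEP(K_n)^\sigma) = \frac{\gcd(|\sigma_1|,\dots,|\sigma_m|)}{\prod_{i=1}^m |\sigma_i|}\,\rvol(\SEP(K_m)).
\end{equation*}
It then remains only to substitute the value of $\rvol(\SEP(K_m))$. The normalized volume of $\SEP(K_m)$, equivalently the type $A_{m-1}$ root polytope, is the central binomial coefficient $\binom{2(m-1)}{m-1}$, as established by Ardila et al.\ in \cite{siam-root}. Since $\SEP(K_m)$ is $(m-1)$-dimensional (Proposition~\ref{prop:11}), its relative volume in the sense of Definition~\ref{def:rvol} — the leading coefficient of the Ehrhart quasipolynomial — is obtained from the normalized volume by dividing by $(m-1)!$, so $\rvol(\SEP(K_m)) = \frac{1}{(m-1)!}\binom{2(m-1)}{m-1}$. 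Plugging this into the displayed identity yields
\begin{equation*}
\rvol(\SEP(K_n)^\sigma) = \frac{\gcd(|\sigma_1|,\dots,|\sigma_m|)}{(m-1)!\,\prod_{i=1}^m |\sigma_i|}\binom{2(m-1)}{m-1},
\end{equation*}
which is the claim.

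The only point requiring care is normalization bookkeeping: I would verify that the volume formula quoted from \cite{siam-root} is stated with respect to the same lattice normalization as Definition~\ref{def:rvol}, inserting the factor $1/(m-1)!$ to pass from the normalized (lattice) volume to the relative volume if the cited source reports the former. No other step presents any difficulty, as the result is genuinely a corollary of the already-proven volume relationship together with a known special-case computation.
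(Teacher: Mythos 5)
Your proposal is correct and matches the paper's proof: both specialize Theorem~\ref{thm:sep-sub-vol} to $G=K_n$ with contraction $K_m$ and substitute $\rvol(\SEP(K_m))=\frac{1}{(m-1)!}\binom{2(m-1)}{m-1}$ from \cite{siam-root}. Your extra remarks on identifying the contraction as $K_m$ and on the normalized-versus-relative volume bookkeeping are sound elaborations of the same argument, not a different route.
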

\begin{proof}
    From \cite{siam-root} we have $\rvol(\SEP(K_m))=\frac{1}{(m-1)!}\binom{2(m-1)}{m-1}$.
    Then, applying Theorem \ref{thm:sep-sub-vol}, we see \[\rvol(\SEP(K_n)^\sigma)=\frac{\gcd(|\sigma_1|,\dots,|\sigma_m|)}{\prod_{i=1}^m|\sigma_i|}\rvol(\SEP(K_m))=\frac{\gcd(|\sigma_1|,\dots,|\sigma_m|)}{(m-1)!\prod_{i=1}^m|\sigma_i|}\binom{2(m-1)}{m-1},\] as desired.
\end{proof}

We provide an alternative inequality description for the symmetric edge polytope of the complete graph to facilitate later analysis of facet structure.
\begin{corollary}\label{cor:h-desc}
The symmetric edge polytope $\SEP(K_n)$ is the set of points $\vx\in\R^n$ such that ${\bv1}^\top\vx=0$ and $\left\lvert\ve_S^\top\vx\right\rvert\le1$ for all nonempty proper subsets $S\subset[n]$ with $|S|\le\frac{n}{2}$.
    Further, the half-space $\ve_S^\top\vx\ge-1$ in this description is equivalent to the half-space $\ve_{S^C}^\top\vx\le1$ in Proposition \ref{prop:11} when restricted to ${\bv1}^\top\vx=0$.
\end{corollary}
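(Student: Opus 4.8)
The plan is to deduce this directly from the facet description in Proposition~\ref{prop:11} by a purely linear-algebraic repackaging, with no new geometry needed. First I would observe that Proposition~\ref{prop:11} already supplies the inequality description
\[
\SEP(K_n)=\set{\vx\in\R^n}{{\bv1}^\top\vx=0\ \text{and}\ \ve_S^\top\vx\le1\ \text{for all nonempty proper}\ S\subset[n]}.
\]
Indeed, $\SEP(K_n)$ is $(n-1)$-dimensional with affine hull $\{{\bv1}^\top\vx=0\}$, and its $2^n-2$ facets are exactly the slices cut out by the hyperplanes $\ve_S^\top\vx=1$; since $\bv0\in\SEP(K_n)$ with $\ve_S^\top\bv0=0<1$, the polytope lies on the side $\ve_S^\top\vx\le1$ of each facet hyperplane, and a polytope equals the intersection of its affine hull with the half-spaces supporting its facets.

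Next I would record the identity driving everything: if ${\bv1}^\top\vx=0$, then $\ve_S^\top\vx+\ve_{S^C}^\top\vx=\ve_{[n]}^\top\vx={\bv1}^\top\vx=0$, so $\ve_{S^C}^\top\vx=-\ve_S^\top\vx$. This is precisely the ``Further'' clause: restricted to ${\bv1}^\top\vx=0$, the half-space $\ve_{S^C}^\top\vx\le1$ is the same as $-\ve_S^\top\vx\le1$, i.e.\ $\ve_S^\top\vx\ge-1$.

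Finally, I would use this identity together with the involution $S\mapsto S^C$ on nonempty proper subsets to trim the inequality list. For ``$\subseteq$'': given $\vx$ with ${\bv1}^\top\vx=0$ satisfying $\ve_T^\top\vx\le1$ for every nonempty proper $T$, and any $S$ with $1\le|S|\le n/2$, we get $\ve_S^\top\vx\le1$ directly and $\ve_S^\top\vx=-\ve_{S^C}^\top\vx\ge-1$ by applying the hypothesis at $T=S^C$, so $\lvert\ve_S^\top\vx\rvert\le1$. For ``$\supseteq$'': given $\vx$ with ${\bv1}^\top\vx=0$ and $\lvert\ve_S^\top\vx\rvert\le1$ whenever $1\le|S|\le n/2$, and any nonempty proper $T$, either $|T|\le n/2$ and then $\ve_T^\top\vx\le\lvert\ve_T^\top\vx\rvert\le1$, or $|T|>n/2$, in which case $T^C$ is nonempty with $|T^C|<n/2$, so $\ve_T^\top\vx=-\ve_{T^C}^\top\vx\le\lvert\ve_{T^C}^\top\vx\rvert\le1$. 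Hence the two descriptions cut out the same subset of $\{{\bv1}^\top\vx=0\}$. The only point needing a moment's care is the threshold $|S|\le n/2$ when $n$ is even, where $S$ and $S^C$ can both have size $n/2$ and both inequalities are (harmlessly) retained; beyond that the argument is short bookkeeping once the identity $\ve_{S^C}^\top\vx=-\ve_S^\top\vx$ is in hand, so I anticipate no substantive obstacle.
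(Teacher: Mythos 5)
Your proposal is correct and follows essentially the same route as the paper: the entire argument hinges on the identity $\ve_{S^C}^\top\vx=(\bv1-\ve_S)^\top\vx=-\ve_S^\top\vx$ on the hyperplane ${\bv1}^\top\vx=0$, which is exactly the paper's key step. You are merely more explicit about two points the paper leaves implicit, namely that Proposition~\ref{prop:11} yields the full half-space description $\ve_S^\top\vx\le1$ over all nonempty proper $S$, and the bookkeeping with the involution $S\mapsto S^C$ that trims the list to $|S|\le n/2$.
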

\begin{proof}
    Let $\vx\in\R^n$ such that ${\bv1}^\top\vx=0$ and $S$ be any nonempty proper subset of $[n]$.
    We will show that $\ve_S^\top\vx\ge-1$ if and only if $\ve_{S^C}^\top\vx\le1$; the desired inequality description and the noted correspondence with Proposition \ref{prop:11} then follow.
    Observe that $\ve_{S^C}=\bv1-\ve_S$, so \[\ve_{S^C}^\top\vx=(\bv1-\ve_S)^\top\vx=-\ve_S^\top\vx.\]
    Thus, $\ve_S^\top\vx\ge-1$ if and only if $-\ve_S^\top\vx\le1$ if and only if $\ve_{S^C}^\top\vx\le1$, as desired.
\end{proof}
The corollary above captures the centrally symmetric structure of $SEP(K_n)$.
By restricting the inequalities to an invariant subspace, we are able to characterize facets of the fixed polytope induced by the original facets and further count the facets of $SEP(K_n)^\sigma$.

\begin{theorem}\label{thm:SEP-H-des-cycle-decomp}
    Let $\sigma \in S_{n}$ have cycle decomposition $\sigma_{1}\cdots\sigma_{m}$, and label the facets of $\SEP(K_{n})$ each with proper subsets of $[n]$ as in Proposition \ref{prop:11}.
    Then $\SEP(K_{n})^{\sigma}$ has exactly $2^{m}-2$ facets.
    Each facet is contained in a distinct facet of $\SEP(K_{n})$ whose labels satisfy $S \cap \sigma_{i} = \emptyset$ or $S \cap \sigma_{k}=\sigma_{k}$ for each $k \in [m]$.
\end{theorem}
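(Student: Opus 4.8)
The plan is to derive everything from the linear equivalence $\SEP(K_n)^\sigma\cong\SEP(K_m)$ established in Corollary \ref{corollary: SEP-comb-equivalence-lower-dim}, used together with the explicit form of that equivalence (the map $\phi$ from Corollary \ref{cor:contraction-equiv}) and the facet labeling of $\SEP(K_m)$ provided by Proposition \ref{prop:11}. The facet count is immediate: since $\SEP(K_n)^\sigma$ is combinatorially equivalent to $\SEP(K_m)$, it has the same number of facets, and applying Proposition \ref{prop:11} to $K_m$ gives $2^m-2$. The remaining work is to locate each of these facets inside $\SEP(K_n)$.

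Recall from Corollary \ref{cor:contraction-equiv} that $\phi\colon(\R^n)^\sigma\to\R^m$, defined by $\phi(\ve_{\sigma_i})=|\sigma_i|\ve_i$, is a linear isomorphism carrying $\SEP(K_n)^\sigma$ onto $\SEP(K_m)$; in coordinates, if $\vx=\sum_{k=1}^m t_k\ve_{\sigma_k}\in(\R^n)^\sigma$ then $\phi(\vx)=\sum_{k=1}^m|\sigma_k|t_k\,\ve_k$. For a nonempty proper subset $T\subseteq[m]$, set $S_T=\bigcup_{k\in T}\sigma_k\subseteq[n]$, which is nonempty and proper because the blocks $\sigma_k$ partition $[n]$. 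The key computation is that, for $\vx$ as above,
\begin{equation*}
    \ve_T^\top\phi(\vx)=\sum_{k\in T}|\sigma_k|t_k=\sum_{k=1}^m|S_T\cap\sigma_k|\,t_k=\ve_{S_T}^\top\vx,
\end{equation*}
since $S_T\cap\sigma_k$ equals $\sigma_k$ when $k\in T$ and $\emptyset$ otherwise.

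With this identity in hand, $\phi^{-1}$ maps the facet $\{\bv{y}\in\SEP(K_m):\ve_T^\top\bv{y}=1\}$ of $\SEP(K_m)$ (Proposition \ref{prop:11}) onto $\{\vx\in\SEP(K_n)^\sigma:\ve_{S_T}^\top\vx=1\}$, which, being the intersection of $\SEP(K_n)^\sigma$ with the hyperplane $\ve_{S_T}^\top\vx=1$, is contained in the facet of $\SEP(K_n)$ labeled $S_T$. Because $\phi$ is a linear isomorphism of the affine spans it induces an isomorphism of face lattices, so each of these sets is genuinely a facet of $\SEP(K_n)^\sigma$; ranging over all nonempty proper $T\subseteq[m]$ yields $2^m-2$ of them, which by the count above is all of them. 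Finally $T\mapsto S_T$ is injective, so these facets lie in $2^m-2$ distinct facets of $\SEP(K_n)$, and by construction each label $S_T$ satisfies $S_T\cap\sigma_k\in\{\emptyset,\sigma_k\}$ for every $k\in[m]$, as claimed.

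I do not expect a substantive obstacle: once $\phi$ is available the argument is essentially bookkeeping, and the only step that warrants care is the displayed identity $\ve_T^\top\phi(\vx)=\ve_{S_T}^\top\vx$ together with the accompanying observation that membership $i\in S_T$ depends only on the cycle block containing $i$ --- this is precisely what forces the relevant facet labels of $\SEP(K_n)$ to be unions of cycle blocks. (Alternatively, one could restrict the hyperplane description of Corollary \ref{cor:h-desc} to $(\R^n)^\sigma$ and argue directly which of those inequalities remain facet-defining after restriction, but routing through $\phi$ sidesteps that redundancy analysis.)
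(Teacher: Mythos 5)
Your proposal is correct, but it proves the theorem by a genuinely different route than the paper. The paper works directly with the inequality description of $\SEP(K_n)$ restricted to the fixed subspace: for each block-union label $S$ it exhibits the explicit point $\frac{1}{|S|}\ve_S-\frac{1}{|S^C|}\ve_{S^C}$, checks that it is $\sigma$-fixed and lies in the relative interior of the facet of $\SEP(K_n)$ labeled $S$ (so the fixed polytope has a facet inside that facet), and then argues that the inequalities coming from labels with $\emptyset\subsetneq S\cap\sigma_k\subsetneq\sigma_k$ are redundant for $\SEP(K_n)^\sigma$; the count $2^m-2$ then comes from counting block-union labels. You instead transport the known facet structure of $\SEP(K_m)$ (Proposition \ref{prop:11}) through the explicit linear equivalence $\phi$ of Corollary \ref{cor:contraction-equiv}, with the key identity $\ve_T^\top\phi(\vx)=\ve_{S_T}^\top\vx$ on $(\R^n)^\sigma$; this gives the facet count for free from combinatorial equivalence and, as you note, sidesteps the redundancy analysis entirely, which is the most delicate step of the paper's argument. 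What the paper's route buys in exchange is an H-description-level statement (which supporting hyperplanes of $\SEP(K_n)$ survive restriction, dovetailing with Corollary \ref{cor:h-desc}) together with explicit relative-interior witnesses in each surviving facet. Two small points you should make explicit to be airtight: that the facet labeled $S$ (resp.\ $T$) is exactly $\SEP(K_n)\cap\{\ve_S^\top\vx=1\}$ (resp.\ $\SEP(K_m)\cap\{\ve_T^\top\bv{y}=1\}$), i.e.\ the hyperplane of Proposition \ref{prop:11} is supporting and cuts out precisely that facet --- easy from the vertex description $\conv\set{\ve_i-\ve_j}{i\in S,\, j\in S^C}$, but used when you write $F_T$ as an intersection and when you conclude containment in the facet of $\SEP(K_n)$; and that facets correspond to facets under $\phi$ because a linear isomorphism of the affine spans preserves dimensions of faces, which you assert and is fine.
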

\begin{proof}
    First, we will show that the interior of each facet of $\SEP(K_n)$ whose label contains all of $\sigma_k$ or none of $\sigma_k$ for each $k\in[m]$ contains a point in $\SEP(K_n)^\sigma$ and thus contains a facet of $\SEP(K_n)^\sigma$.

    Consider a facet with label $S$ satisfying this condition, and define $\vx=\frac{1}{|S||S^C|}\sum_{(i,j)\in S\times S^C}(\ve_i-\ve_j)$, which is a convex combination of vectors of the form $\ve_i-\ve_j$, and thus contained in $\SEP(K_n)$.
    Rearranging, we can see \[\vx=\frac{1}{|S|}\sum_{i\in S}\ve_i-\frac{1}{|S^C|}\sum_{j\in S^C}\ve_j=\frac{1}{|S|}\ve_S-\frac{1}{|S^C|}\ve_{S^C},\] so $\ve_S^\top\vx=\frac{|S|}{|S|}=1$, meaning $\vx$ is on the facet with label $S$.
    Further, for any $S'\ne S\subset[n]$,
    \begin{align*}
    \ve_{S'}^\top\vx & =(\ve_S+\ve_{S'\setminus S}-\ve_{S\setminus S'})^\top\vx\\ 
    & =\ve_S^\top\vx+\ve_{S'\setminus S}^\top\vx-\ve_{S\setminus S'}^\top\vx.
    \end{align*}
    Noting that $\ve_{S'\setminus S}^\top\vx\le0$,\; $\ve_{S\setminus S'}^\top\vx\ge0$,\; $\ve_S^\top\vx=1$, and that $\ve_{S'\setminus S}^\top\vx$ and $\ve_{S\setminus S}^\top\vx$ cannot both be zero, we have $\ve_{S'}^\top\vx<1$, so $\vx$ is not on any other facet of $\SEP(K_n)$.
    Thus, $\vx$ is in the interior of the facet of $\SEP(K_n)$ with label $S$.

    Additionally, note that whether $\sigma_{k}\subset S$ or $S\cap\sigma_{k}=\emptyset$, $\sigma_{k}\cdot S=S$, so for all $k \in [m]$, 
    \begin{align*}
        \sigma_{k}\cdot\vx &=\frac{1}{|S|}\sum_{i\in S}\ve_{\sigma_{k}(i)}-\frac{1}{|S^C|}\sum_{j\in S^C}\ve_{\sigma_{k}(j)}\\
        &=\frac{1}{|S|}\sum_{i\in\sigma_{k}(S)}\ve_i-\frac{1}{|S^C|}\sum_{i\in\sigma_{k}(S^C)}\ve_j\\
        &=\frac{1}{|S|}\sum_{i\in S}\ve_i-\frac{1}{|S^C|}\sum_{j\in S^C}\ve_j=\vx.
    \end{align*}
    Thus, $\vx\in\SEP(K_n)^\sigma$, so $\SEP(K_n)^\sigma$ contains a point in the interior of the facet of $\SEP(K_n)$ with label $S$, and thus has a facet contained in the facet of $\SEP(K_n)$ with label $S$.
    Note that if $S$ satisfies this condition, then $S^C$ also does, so the inequalities $\ve_S^\top\vx\le1$ and $\ve_{S^C}^\top\vx\le1$ are both included in the inequality description of $\SEP(K_n)^\sigma$.
    These inequalities can be paired to write $|\ve_S^\top\vx|\le1$ as in Corollary \ref{cor:h-desc}.

    Now, consider a facet with label $S$ such that $\emptyset\subsetneq S\cap\sigma_{k}\subsetneq\sigma_{k}$ for some $k\in[m]$.
    Note that $0\le(\ve_S)_i\le(\ve_{S\cup\sigma_{k}})_i$ for all $i\in[n]$.
    Recall that for any $\va,\vb \in \mathbb{R}^{n}$ such that $0 \leq a_{i} \leq b_{i}$ for all $i \in [n]$, we have $|\va^{\top}\vx|\leq |\vb^{\top}\vx|$ for all $\vx \in \mathbb{R}^{n}$.
    Hence, $\left\lvert\ve_{S\cup\sigma_{k}}^\top\vx\right\rvert\le1$ implies $\left\lvert\ve_S^\top\vx\right\rvert\le1$, so the latter pair of inequalities is redundant given the former.
    Since the former inequalities come from facets of $\SEP(K_n)$ with labels that include all of $\sigma_{k}$ or none of it, we have already shown that the former inequalities are included in the inequality description of $\SEP(K_n)^\sigma$.
    Thus, the half-planes defined by facets of $\SEP(K_n)$ with labels for which $\emptyset\subsetneq S\cap\sigma_{k}\subsetneq\sigma_{k}$ are not needed in the inequality description of $\SEP(K_n)^\sigma$ and do not correspond to its facets.
\end{proof}
 
\section*{Acknowledgments} 
We are grateful to the Mathematics Department at Harvey Mudd College for providing a wonderful environment to produce our work.
We are grateful to Michael Orrison and Dagan Karp for feedback on our presentation of results.
We further thank DruAnn Thomas and Jon Jacobson for their support.
TAC and ARVM thank the Rose Hills Foundation for providing partial funding to pursue this research.


\bibliographystyle{amsplain}
\bibliography{equivariant-references}

\end{document}